\newtheorem{thm}{Theorem}[section]
\newtheorem{prop}[thm]{Proposition}
\newtheorem{cor}[thm]{Corollary}
\newtheorem{lem}[thm]{Lemma}
\theoremstyle{definition}
\theoremstyle{remark}
\newtheorem{example}[thm]{Example}
\newcounter{substep}
\def\thesubstep{\arabic{substep}}
\newenvironment{substeps}[1]{%
\refstepcounter{substep}\noindent{(\ref{#1}.\thesubstep)\ }\ }%
{\em}
\newcounter{subsubstep}
\def\thesubsubstep{\arabic{subsubstep}}
\newcommand{\K}{{\mathbb K}} 
 \newcommand{\R}{{\mathbb R}}
 \newcommand{\C}{{\mathbb C}}
 \newcommand{\F}{{\mathbb F}}
\newcommand{\sph}{{\mathbb S}} 
 \newcommand{\PP}{{\mathbb P}}
\newcommand{\cl}{\operatorname{Cl}}
\newcommand{\im}{\operatorname{im}}
\newcommand{\zar}{\operatorname{zar}}
\newcommand{\tr}{\operatorname{tr}}
\newcommand{\x}{{\tt x}} \newcommand{\y}{{\tt y}}
 \renewcommand{\t}{{\tt t}}
\newcommand{\ol}{\overline}
\title[On the one dimensional polynomial and regular images of $\R^n$]{On the one dimensional polynomial\\ and regular images of $\R^n$}
\date{}
\author{Jos\'e F. Fernando}
\address{Departamento de \'Algebra, Facultad de Ciencias Matem\'aticas, Universidad Complutense de Madrid, 28040 MADRID (SPAIN)}
\email{josefer@mat.ucm.es}
\thanks{Author supported by Spanish GR MTM2011-22435. 
}
\begin{document}

\begin{abstract}
In this work we present a full geometric characterization of the $1$-dimensional polynomial and regular images $S$ of $\R^n$ and we compute for all of them the invariants ${\rm p(S)}$ and ${\rm r}(S)$, already introduced in \cite{fg2}.
\end{abstract}
\date{08/11/2012}

\subjclass[2010]{Primary: 14P10, 14P05; Secondary: 14P25, 14Q05}
\keywords{Polynomial and regular maps and images, rational curve, Zariski closure, normalization, irreducibility, infinite points.}
\maketitle

\section{Introduction}\label{s1}
A map $f:=(f_1,\ldots,f_m):\R^n\to\R^m$ is a {\em polynomial map} if each of its components $f_i\in\R[\x]:=\R[\x_1,\ldots,\x_n]$. A subset $S$ of $\R^m$ is a {\em polynomial image} of $\R^n$ if there exists a polynomial map $f:\R^n\to\R^m$ such that $S=f(\R^n)$. Let $S$ be a subset of $\R^m$; we define
$$
{\rm p}(S):=
\left\{
\begin{array}{ll}
\text{least $p\geq 1$}&\text{ such that $S$ is a polynomial image of $\R^p$,}\\[6pt]
+\infty&\text{ otherwise.}
\end{array}
\right.
$$
More generally, a map $f:=(f_1,\ldots,f_m):\R^n\to\R^m$ is a {\em regular map} if each component $f_i$ is a regular function of $\R(\x):=\R(\x_1,\ldots,\x_n)$, that is, each $f_i=g_i/h_{i}$ is a quotient of polynomials such that the zero set of $h_{i}$ is empty. Analogously, a subset $S$ of $\R^m$ is a {\em regular image} of $\R^n$ if it is the image $S=f(\R^n)$ of $\R^n$ by a regular map $f$ and we define the invariant
$$
{\rm r}(S):=
\left\{
\begin{array}{ll}
\text{least $r\geq 1$}&\text{ such that $S$ is a regular image of $\R^r$,}\\[6pt]
+\infty&\text{ otherwise.}
\end{array}
\right.
$$
Obviously, ${\rm r}(S)\leq{\rm p}(S)$ and by Tarski's Theorem (see \cite[2.8.8]{bcr}) the dimension $\dim S$ of $S$ is less than or equal to both of them. Of course, the inequalities $\dim S\leq{\rm r}(S)\leq{\rm p}(S)$ can be strict and it may happen that the second invariant is finite while the third is infinite, even if $S\subset\R$ (see Lemma \ref{interval}).

A celebrated Theorem of Tarski-Seidenberg \cite[1.4]{bcr} says that the image of any polynomial map (and more generally of a regular map) $f:\R^m\rightarrow\R^n$ is a semialgebraic subset $S$ of $\R^n$, that is, it can be written as a finite boolean combination of polynomial equations and inequalities, which we will call a \em semialgebraic \em description. By \em elimination of quantifiers, \em $S$ is semialgebraic if it has a description by a first order formula \em possibly with quantifiers\em. Such a freedom gives easy semialgebraic descriptions for topological operations: interiors, closures, borders of semialgebraic sets are again semialgebraic.

In an \em Oberwolfach \em week \cite{g}, Gamboa proposed to characterize the semialgebraic sets of $\R^m$ which are polynomial images of $\R^n$ for some $n\geq1$. The interest of polynomial (and also regular) images is far from discussion since there are many problems in Real Algebraic Geometry that for such sets can be reduced to the case $S=\R^n$ (see \cite{fg1}, \cite{fg2} or \cite{fgu} for further comments). Examples of such problems are:
\begin{itemize}
\item optimization of polynomial (and/or regular) functions on $S$, 
\item characterization of the polynomial (or regular functions) which are positive semidefinite on $S$ (Hilbert's 17th problem and Positivestellensatz),
\end{itemize}

As we have already pointed out in \cite{fg1} there are some straightforward properties that a regular image $S\subset\R^m$ much satisfy: it must be pure dimensional, connected, semialgebraic and its Zariski closure must be irreducible. Furthermore, $S$ must be, by \cite[3.1]{fg3}, \em irreducible \em in the sense that its ring ${\mathcal N}(S)$ of Nash functions on $S$ is an integral domain. Recall here that a \em Nash function \em on an open semialgebraic subset $U\subset\R^m$ is an analytic function that satisfies a nontrivial polynomial equation, that is, there exists $P\in\R[\x,\y]$ such that $P(x,f(x))=0$ for all $x\in U$. Now, the ring ${\mathcal N}(S)$ of Nash functions on $S$ is the collection of all functions on $S$ that admit a Nash extension to an open semialgebraic neighborhood $U$ of $S$ in $\R^m$, endowed with the usual sum and product (for further details see \cite{fg3}). 

In this work we focus our attention in the one dimensional case and we present a full geometric characterization of the polynomial and regular one dimensional images of $\R^n$; in fact, we compute the exact value of the invariants ${\rm p}$ and ${\rm r}$ for all of them. As we will see along this work in the one dimensional case the only three possible values for both invariants ${\rm p}$ and ${\rm r}$ are $1,2$ or $+\infty$. In fact, all the possibilities with the restriction $1\leq{\rm r}\leq{\rm p}\leq +\infty$ are attained except for the pair ${\rm r}=1$ y ${\rm p}=2$ which is not attainable (see Theorems \ref{dim1p} and \ref{dim1r}, Propositions \ref{p12} and \ref{r12}, Corollary \ref{nor1p2} and Lemma \ref{interval} to complete the picture). We provide the following table illustrating the situation.

\vspace{2mm}
\begin{center}
{\setlength{\arrayrulewidth}{.5pt}
\renewcommand*{\arraystretch}{1.5}
\begin{tabular}{|c|c|c|c|c|c|c|}
\hline
$S$&$\R$ \'o $[0,+\infty)$&$-$&$[0,1)$&$(0,+\infty)$&$(0,1)$&Any non rational algebraic curve\\
\hline
${\rm r}(S)$&$1$&$1$&$1$&$2$&$2$&$+\infty$\\
\hline
${\rm p}(S)$&$1$&$2$&$+\infty$&$2$&$+\infty$&$+\infty$\\
\hline
\end{tabular}}
\end{center}

\vspace{2mm}
We recall that the study of the one dimensional polynomial images of $\R^n$ was partially and naively approached before in our previous work \cite[\S2]{fg2}, but without presenting any conclusive result. 

\subsection*{Notations and terminology}
Before stating our main results, whose proofs are developed in Section \ref{s3} after the preparatory work of Section \ref{s2}, we recall some preliminary standard notations and terminology. We write $\K$ to refer indistinctly to $\R$ or $\C$ and we denote by $\mathsf{H}_\infty(\K):=\{x_0=0\}$ the hyperplane of infinity of the projective space $\K\PP^m$, which contains $\K^m$ as the set $\K\PP^m\setminus\mathsf{H}_\infty(\K)=\{x_0=1\}$. In case $m=1$, we denote $\{p_\infty\}:=\{x_0=0\}$ the point of infinity of the projective line $\K\PP^1$.

For each $n\geq1$ denote by
$$
\sigma_n:\C\PP^n\to\C\PP^n,\ z=(z_0:z_1:\cdots:z_n)\mapsto\ol{z}=(\ol{z_0}:\ol{z_1}:\cdots:\ol{z_n})
$$
the complex conjugation. Clearly, $\R\PP^n$ is the set of fixed points of $\sigma_n$. A set $A\subset\C\PP^n$ is called \em invariant \em if $\sigma(A)=A$. It is well-know that if $Z\subset\C\PP^n$ is an invariant nonsingular (complex) projective variety, then $Z\cap\R\PP^n$ is a nonsingular (real) projective variety. We also say that a rational map $h:\C\PP^n\dashrightarrow\C\PP^m$ is \em invariant \em if $h\circ\sigma_n=\sigma_m\circ h$. Of course, $h$ is invariant if its components can be chosen as homogeneous polynomials with real' coefficients; hence, it provides by restriction a ``real'' rational map $h|_{\R\PP^n}:\R\PP^n\dashrightarrow\R\PP^m$.

Given a semialgebraic set $S\subset\R^m\subset\R\PP^m\subset\C\PP^m$, we denote by $\cl_{\K\PP^m}^{\zar}(S)$ its Zariski closure in $\K\PP^m$. Obviously, $\cl_{\C\PP^m}^{\zar}(S)\cap\R\PP^m=\cl_{\R\PP^m}^{\zar}(S)$ and $\cl^{\zar}(S)=\cl_{\R\PP^m}^{\zar}(S)\cap\R^m$ is the \em Zariski closure of $S$ in $\R^m$\em. We denote by $S_{(k)}$ the set of points of $S$ that have  local dimension $k$. 

Recall that a \em complex rational curve \em is the image of $\C\PP^1$ under a birational (and hence regular) map while a \em real rational curve \em is a real projective irreducible curve $C$ such that $C_{(1)}$ is the image of $\R\PP^1$ under a birational (and hence regular) map (see Lemma \ref{curve}).

\subsection*{\bf Main results.} We begin by a geometrical characterization of the $1$-dimensional polynomial images of Euclidean spaces (that is, those with ${\rm p}=1,2$, see also \cite[2.1-2]{fg2}) and after that we determine those with ${\rm p}=1$.

\begin{thm}\label{dim1p}
Let $S\subset\R^m$ be a $1$-dimensional semialgebraic set. Then, the following assertions are equivalent:
\begin{itemize}
\item[(i)] $S$ is a polynomial image of $\R^n$ for some $n\geq1$.
\item[(ii)] $S$ is irreducible, unbounded and $\cl_{\C\PP^m}^{\zar}(S)$ is an invariant rational curve such that $\cl_{\C\PP^m}^{\zar}(S)\cap\mathsf{H}_\infty(\C)$ is a singleton $\{p\}$ and the germ $\cl_{\C\PP^m}^{\zar}(S)_p$ is irreducible.
\end{itemize}
In particular, if that is the case ${\rm p}(S)\leq2$.
\end{thm}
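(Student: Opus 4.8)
The plan is to prove the two implications separately, with the bulk of the work lying in (ii)$\Rightarrow$(i).

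\textbf{(i)$\Rightarrow$(ii).} Suppose $S=f(\R^n)$ for a polynomial map $f$. Irreducibility of $S$ follows from the cited result \cite[3.1]{fg3} (a polynomial, hence regular, image is irreducible in the Nash sense). Unboundedness is forced because a nonconstant polynomial map on $\R^n$ cannot have relatively compact image: restricting $f$ to a generic affine line $\ell\cong\R$ through a point where $f$ is nonconstant (such a line exists since $\dim S=1>0$) gives a nonconstant polynomial curve $\R\to\R^m$, which is proper and therefore unbounded; so $S$ is unbounded. For the statement about the Zariski closure, I would homogenize: write $f=(f_1,\dots,f_m)$ with $d=\max_i\deg f_i$ and consider the rational map $F:\C\PP^n\dashrightarrow\C\PP^m$, $z\mapsto(z_0^d:z_0^df_1(z/z_0):\cdots:z_0^df_m(z/z_0))$, which is invariant since the $f_i$ are real. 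Its restriction to $\R^n\subset\R\PP^n$ is $x\mapsto(1:f(x))$, so $F(\R\PP^n)$ has Zariski closure equal to $\cl_{\C\PP^m}^{\zar}(S)$ (a $1$-dimensional projective variety), which is therefore a complex rational curve, invariant under conjugation, being dominated by $\C\PP^n$ through $F$ (and in fact by $\C\PP^1$ after restricting $F$ to a generic line). It remains to see that $\cl_{\C\PP^m}^{\zar}(S)$ meets $\mathsf{H}_\infty(\C)$ in a single point with irreducible germ there: this is where one uses that $f$ is \emph{polynomial} rather than merely regular. The map $\R\PP^1\dashrightarrow\cl_{\C\PP^m}^{\zar}(S)$ obtained from the normalization has a single pole (the point $t=\infty$ on the line maps to infinity, and the affine part $t\in\R$ maps into $\R^m$), and a polynomial parametrization of the curve near infinity shows the branch at $p$ is unique and irreducible; I expect to invoke Lemma \ref{curve} and the normalization machinery from Section \ref{s2} to make this precise.

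\textbf{(ii)$\Rightarrow$(i) and ${\rm p}(S)\leq 2$.} This is the substantive direction and I expect it to be the main obstacle. Given the geometric hypotheses, let $C:=\cl_{\C\PP^m}^{\zar}(S)$; it is an invariant rational curve, so its normalization is $\C\PP^1$ and, because $C$ is invariant with a real branch, the normalization map can be chosen invariant, giving a birational invariant morphism $\nu:\C\PP^1\to C$ whose real part parametrizes $C_{(1)}\supset S$ by a regular map $\R\PP^1\dashrightarrow\R^m$ (Lemma \ref{curve}). Since $C\cap\mathsf{H}_\infty(\C)=\{p\}$ is a single point with irreducible germ, its preimage under $\nu$ is a single point $q\in\R\PP^1$ (it must be real, being the unique point over a real point), and the branch being irreducible means $\nu^{-1}(\mathsf{H}_\infty(\C))=\{q\}$ scheme-theoretically up to a point — concretely, after an automorphism of $\C\PP^1$ we may take $q=\infty$, so that $\nu$ restricts to a \emph{polynomial} map $g:\R\to\R^m$ (no poles on the affine line) with $g(\R)$ Zariski-dense in $C_{(1)}$, and $g(\R)$ differs from the full curve $C_{(1)}\cap\R^m$ by at most finitely many points. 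The remaining task — covering $S$ exactly, not just up to finitely many points, and doing so with source dimension $2$ — is handled by the standard toolkit for polynomial images: one uses that $S$ is semialgebraic, irreducible and $1$-dimensional, so $S$ is the closure in $C_{(1)}\cap\R^m$ of a finite union of open arcs, and one builds a polynomial map $\R^2\to\R^m$ whose image is exactly $S$ by composing $g$ (reparametrized) with standard ``bump'' constructions that realize each half-line $[0,+\infty)$ and each open half-line $(0,+\infty)$ and each point and each interval $[a,b]$ as polynomial images in two variables and then glues them; the unboundedness of $S$ guarantees at least one unbounded arc is present so that the gluing respecting the single point at infinity is possible. I would organize this via the auxiliary results \cite[2.1-2]{fg2} and Lemma \ref{interval}, reducing to a finite list of ``elementary'' one-dimensional semialgebraic pieces along the curve and checking each is a polynomial image of $\R^2$, the two variables being needed precisely to merge several arcs and to fill in missing endpoints.

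\textbf{Main obstacle.} The delicate point is the exact (not merely generic) description of $S$ as an image: the parametrization $g:\R\to\R^m$ coming from the normalization captures $C_{(1)}\cap\R^m$ only up to finitely many points (the images of the finitely many real points of $\C\PP^1$ other than $\infty$ that map to $\mathsf{H}_\infty$ or to singular points), and $S$ itself is a proper semialgebraic subset of the curve with a prescribed, possibly complicated, combinatorial structure of arcs and endpoints; squeezing this into a polynomial image of $\R^2$ while keeping the single-point-at-infinity condition intact is where the genuine work is, and it is exactly the content that \cite[\S2]{fg2} approached "partially and naively" without a conclusive result. I would therefore expect the heart of the proof to be a careful case analysis, packaged as separate lemmas, on how to realize the possible local configurations of $S$ along the rational curve $C$ as polynomial images of the plane.
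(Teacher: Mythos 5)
Your direction (i)$\Rightarrow$(ii) is essentially sound and takes a mildly different route from the paper: instead of factoring $f$ through $\R$ via L\"uroth (Lemma \ref{fact2}), you restrict $f$ to an affine line $\ell$ on which it is non-constant and apply the one-variable analysis to $f|_\ell$. Since $\cl_{\C\PP^m}^{\zar}(S)$ is an irreducible curve containing the $1$-dimensional set $f(\ell)$, the two Zariski closures coincide, and Lemma \ref{fact1}(iii) applied to the polynomial map $f|_\ell:\R\to\R^m$ delivers the invariant rational curve, the single point at infinity and the irreducibility of the germ there. This is a legitimate and arguably more economical substitute for the paper's use of Lemma \ref{fact2} in this implication (the paper still needs Lemma \ref{fact2} elsewhere, e.g.\ for Theorem \ref{dim1r}).

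The gap is in (ii)$\Rightarrow$(i). Writing $C:=\cl_{\C\PP^m}^{\zar}(S)$, you correctly extract from the normalization a polynomial parametrization $\pi:\R\to\R^m$ of the affine part of $C_{(1)}$ (the fiber over $p$ is a single real point because the germ $C_p$ is irreducible, so after a real automorphism of $\C\PP^1$ one gets $\Pi_0=\t_0^d$ and $\pi:=\Pi|_\R$ is polynomial). But from there you propose to decompose $S$ into arcs and isolated endpoints, realize each piece as a polynomial image of $\R^2$, and ``glue''; this is not a proof, and it is precisely the naive strategy that the introduction says was attempted inconclusively in \cite[\S2]{fg2}. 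It also starts from an incorrect description of $S$: a set such as $(0,+\infty)$ is a polynomial image of $\R^2$ yet is not the closure of a union of open arcs, and conversely an arbitrary finite union of arcs along $C$ need not be a polynomial image at all (it can fail to be connected or irreducible). The missing idea is to exploit the irreducibility of $S$ through \cite[3.5]{fg3}: since $(\R,\pi)$ is the normalization of $\cl^{\zar}(S)$, irreducibility forces $S=\pi(I)$ for a \emph{single} interval $I\subset\R$, which is unbounded because $S$ is; Lemma \ref{interval} then shows that $I$, and hence $S$, is a polynomial image of $\R^2$. Without this reduction to one interval, the argument does not close, and the case analysis you defer to is exactly the part that cannot be carried out by gluing.
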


\begin{prop}\label{p12}
Let $S\subset\R^m$ be a $1$-dimensional semialgebraic set which is a polynomial image of $\R^n$ for some $n\geq1$. Then, ${\rm p}(S)=1$ if and only if $S$ is closed in $\R^m$. 
\end{prop}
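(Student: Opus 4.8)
\emph{If ${\rm p}(S)=1$, then $S$ is closed.} Write $S=f(\R)$ with $f=(f_1,\dots,f_m):\R\to\R^m$ polynomial. Since $\dim S=1$, $f$ is non constant, so $\|f(\t)\|^2=\sum_i f_i(\t)^2\in\R[\t]$ is a non constant polynomial with positive leading coefficient, whence $\|f(\t)\|\to+\infty$ as $|\t|\to+\infty$. Thus $f$ is proper, hence a closed map into the locally compact space $\R^m$, and $S=f(\R)$ is closed.

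\emph{If $S$ is closed, then ${\rm p}(S)=1$.} Here I would first invoke Theorem \ref{dim1p}: $S$ is irreducible, unbounded, $X:=\cl_{\C\PP^m}^{\zar}(S)$ is an invariant rational curve, $X\cap\mathsf{H}_\infty(\C)=\{p\}$ is a singleton and $X_p$ is irreducible. Since $\{p\}$ is the only point of the invariant set $X\cap\mathsf{H}_\infty(\C)$, the point $p$ is real; take the normalization $\nu:\C\PP^1\to X$ (which can be chosen invariant), and note that $\nu^{-1}(p)$ is a single real point $q$ because $X_p$ is unibranch. Fixing a real coordinate on $\C\PP^1$ with $q=p_\infty$, the restriction of $\nu$ to $\C\PP^1\setminus\{q\}=\C$ is a polynomial map $\C\to\C^m$ with real coefficients (as $\nu^{-1}(\mathsf{H}_\infty(\C))=\{q\}$), whose restriction $g:\R\to\R^m$ satisfies $g(\R)=\nu(\R\PP^1)\setminus\{p\}=C_{(1)}$, the one‑dimensional part of $C:=\cl^{\zar}(S)=X\cap\R^m$ (using Lemma \ref{curve}); moreover $g$ is proper, since $g(\t)\to p$ at infinity. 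As $S$ is connected and $1$-dimensional, no point of $S$ can be an isolated point of $C$ (it would then be isolated in $S$, forcing $S$ to be a point), so $S\subseteq C_{(1)}=g(\R)$, and hence $S=g(A)$ with $A:=g^{-1}(S)$, a semialgebraic set which is closed (because $S$ is) and unbounded (because $S$ is and $g$ is proper).

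The decisive step is to show that $A$ may be replaced by a \emph{connected} closed unbounded semialgebraic set $J\subseteq\R$, necessarily of the form $[c,+\infty)$, $(-\infty,c]$ or $\R$, still with $g(J)=S$. Granting this, taking $\psi(\t):=c+\t^2$, $\psi(\t):=c-\t^2$ or $\psi:=\id$ accordingly one has $\psi(\R)=J$, so $g\circ\psi:\R\to\R^m$ is polynomial with image $g(J)=S$, and ${\rm p}(S)=1$ (as always ${\rm p}(S)\ge1$). To build $J$ I would pass to $\R\PP^m$: since $S$ is closed in $\R^m$ and unbounded, $\overline{S}^{\R\PP^m}=S\cup\{p\}$ is closed and connected, so $\widetilde A:=\nu^{-1}(S\cup\{p\})$ is closed in the circle $\R\PP^1$, contains $q$, and meets $\R$ exactly in $A$. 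The identifications made by $\nu|_{\R\PP^1}$ occur only over the finitely many singular points of $C$ carrying at least two real branches; the irreducibility of $S$ forces $S$ to meet at most one such branch at each of them, so after deleting from $\widetilde A$ the finitely many redundant isolated points lying over these points one obtains a closed set $\widetilde A'$ with $\nu(\widetilde A')=S\cup\{p\}$ on which $\nu$ is injective. Being a continuous bijection from a compact space onto a Hausdorff space, $\nu|_{\widetilde A'}$ is a homeomorphism, so $\widetilde A'$ is connected; a connected closed subset of $\R\PP^1$ containing $q$ is an arc or the whole circle, and $q$ cannot be interior to such an arc, for otherwise $S\cong(S\cup\{p\})\setminus\{p\}$ would be disconnected. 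Hence $J:=\widetilde A'\setminus\{q\}$ works.

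I expect the main obstacle to be precisely the implication ``$\mathcal{N}(S)$ is a domain $\Rightarrow$ $S$ meets at most one real branch of $C$ at each of its points'': one must check that a local meeting inside $S$ of two distinct real branches of $C$ (transverse or tangent) produces zero divisors in $\mathcal{N}(S)$, and it is this fact that makes the removal of the redundant points leave a connected set, so that $g^{-1}(S)$ becomes essentially a half-line or a line. The remaining ingredients — the behaviour of the normalization and the control of the point at infinity — are routine bookkeeping.
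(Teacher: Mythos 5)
Your forward implication (properness of a non-constant polynomial map $\R\to\R^m$, hence closedness of the image) is exactly the paper's argument. In the converse you also land on the right reduction: everything comes down to producing a \emph{connected}, closed, unbounded set $J\subset\R$ with $g(J)=S$, where $g$ is the affine restriction of the normalization of $\cl^{\zar}_{\C\PP^m}(S)$, after which $J=[c,+\infty)$, $(-\infty,c]$ or $\R$ is handled by composing with $c\pm\t^2$ or the identity. The paper gets this connected set in one stroke: it quotes \cite[3.5]{fg3}, which says that for an irreducible semialgebraic $S$ there is an interval $I$ in the normalization with $\pi(I)=S$, and then merely observes that $I$ may be taken closed and unbounded because $S$ is. So the step you single out as ``decisive'' is precisely the step the paper outsources to an external result.

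Your attempted proof of that step, however, rests on a claim that is false: it is not true that irreducibility of $S$ (i.e.\ ${\mathcal N}(S)$ being a domain) forces $S$ to meet at most one real branch of $\cl^{\zar}(S)$ at each singular point. Take the nodal cubic $C=\{y^2=x^2(x+1)\}$, which is the image of $\R$ under $\t\mapsto(\t^2-1,\t^3-\t)$; it is closed, unbounded, has ${\rm p}(C)=1$, hence is irreducible by \cite[3.1]{fg3}, and yet it contains its node together with \emph{both} real branches. In this example $\pi^{-1}(C)=\R$ is already connected and $\pi$ is two-to-one over the node; there are no ``redundant isolated points'' to delete, and no deletion could make $\pi$ injective while keeping the preimage closed and connected. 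The homeomorphism argument you build on injectivity therefore cannot be repaired as stated. What you actually need is strictly weaker --- a connected $I$ with $\pi(I)=S$, with no injectivity whatsoever --- and that is exactly the content of \cite[3.5]{fg3}; unless you invoke it or reprove it by a different route (e.g.\ analysing the finitely many connected components of $\pi^{-1}(S)$ and using irreducibility of $S$ to show one of them already surjects onto $S$), your argument does not close.
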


The counterpart of the previous results in the regular setting consist of the full geometric characterization of the $1$-dimensional regular images of Euclidean spaces and the description of those with ${\rm r}=1$.

\begin{thm}\label{dim1r}
Let $S\subset\R^m$ be a $1$-dimensional semialgebraic set. Then, the following assertions are equivalent:
\begin{itemize}
\item[(i)] $S$ is a regular image of $\R^n$ for some $n\geq1$.
\item[(ii)] $S$ is irreducible and $\cl_{\R\PP^m}^{\zar}(S)$ is a rational curve.
\end{itemize}
In particular, if that is the case ${\rm r}(S)\leq2$.
\end{thm}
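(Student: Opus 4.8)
The plan is to establish both implications of the equivalence and then extract the bound ${\rm r}(S)\le 2$ from the construction used in (ii)$\Rightarrow$(i).

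For the implication (i)$\Rightarrow$(ii), I would argue as in the polynomial case but using the weaker control that regular maps afford. Suppose $S=f(\R^n)$ for a regular map $f:\R^n\to\R^m$. Irreducibility of $S$ (in the sense that ${\mathcal N}(S)$ is a domain) follows from \cite[3.1]{fg3} as recalled in the introduction, together with the fact that $S$ is connected, semialgebraic and pure $1$-dimensional. For the rational curve statement, the key point is that a regular map $\R^n\dashrightarrow\R\PP^m$ extends to a rational map $\R\PP^n\dashrightarrow\R\PP^m$, which can be composed with a generically finite rational parametrization to produce a dominant rational map from $\R\PP^1$ (or rather from a rational curve through a generic point) onto $C_{(1)}:=(\cl_{\R\PP^m}^{\zar}(S))_{(1)}$; since $\dim S=1$, the Zariski closure $C:=\cl_{\R\PP^m}^{\zar}(S)$ is an irreducible real projective curve, and the existence of such a dominant map from a rational variety forces $C$ to be a rational curve in the sense defined before Theorem \ref{dim1p} (here Lemma \ref{curve} should give the equivalence with the existence of a birational map from $\R\PP^1$). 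Note that, in contrast with Theorem \ref{dim1p}, we no longer need $C$ to meet $\mathsf{H}_\infty(\R)$ in a single point nor any irreducibility of the germ at infinity, nor unboundedness of $S$: the flexibility of regular maps (whose denominators never vanish on $\R^n$) allows us to reach bounded sets and sets whose closure has several branches at infinity.

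For the implication (ii)$\Rightarrow$(i), which is the substantive direction and where the bound ${\rm r}(S)\le 2$ will emerge, I would proceed by normalization. Let $C:=\cl_{\R\PP^m}^{\zar}(S)$ be a rational curve; by definition $C_{(1)}$ is the birational regular image of $\R\PP^1$, so there is a birational regular map $\varphi:\R\PP^1\to C_{(1)}\subset\R\PP^m$. Composing with the inclusion, $S\subset C_{(1)}$ is recovered as $\varphi(A)$ where $A:=\varphi^{-1}(S)\subset\R\PP^1$ is a semialgebraic subset of $\R\PP^1$, hence (after choosing an affine chart, or by rotating $p_\infty$ away from $\overline A$ if $\overline A\ne\R\PP^1$, or handling $A=\R\PP^1$ by a separate two-variable trick) a finite union of points and open intervals of $\R$. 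The problem is thereby reduced to expressing such a set $A\subset\R$ as a regular image of some $\R^r$ with $r\le 2$, and then postcomposing with the regular map $\varphi$. This last reduction is exactly the content of the one-variable analysis: a finite union of points and intervals of $\R$ is a regular image of $\R$ or $\R^2$ — this is presumably Lemma \ref{interval} together with Propositions \ref{r12}/\ref{p12}, which I would invoke directly. One must be careful that $\varphi$ is only regular on $\R\PP^1$, not on $\R$; so the precise statement to use is that $A$ is the image of a regular map $\R^r\to\R\PP^1$ (landing in a chart disjoint from the relevant pole of $\varphi$), after which $\varphi\circ(\text{that map})$ is a genuine regular map $\R^r\to\R^m$ with image $S$.

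The main obstacle I anticipate is the bookkeeping at the point(s) of infinity of $\R\PP^1$ and of $C$: one must ensure that the birational map $\varphi:\R\PP^1\dashrightarrow C$ is actually everywhere regular on the relevant real locus (this is why the definition insists $C_{(1)}$ be the image of $\R\PP^1$ under a birational \emph{regular} map, and Lemma \ref{curve} will be used here), and one must arrange charts so that the composite $\R^r\to\R\PP^1\xrightarrow{\varphi}\R^m$ has no denominators vanishing on $\R^r$. A secondary subtlety is the case $\varphi^{-1}(S)=\R\PP^1$, i.e. $S=C_{(1)}$ itself: then $S$ is compact (when $C\subset\R\PP^m$ already, not just $C\cap\R^m$) or is all of a projective curve, and one shows directly that a rational curve minus at most finitely many points, or the whole real part of it, is a regular image of $\R^2$ by using a parametrization $\R\PP^1\to\R\PP^1$ of the form $t\mapsto$ (suitable rational function) composed with a surjection $\R^2\to\R\PP^1$ — again reducing to the one-variable lemmas. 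Once these chart and infinity issues are dispatched, the bound ${\rm r}(S)\le 2$ is immediate from the construction, since every building block used is a regular image of at most $\R^2$ and regular maps compose.
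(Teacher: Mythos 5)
Your direction (i)$\Rightarrow$(ii) is essentially the paper's argument: the paper factors $f=h\circ g$ through $\R$ via L\"uroth (Lemma \ref{fact2}) and then applies Lemma \ref{fact1} to $h$; your ``dominant rational map from a rational variety forces $C$ to be rational'' is the same L\"uroth step stated loosely, and it goes through.

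The gap is in (ii)$\Rightarrow$(i), at the step where you set $A:=\varphi^{-1}(S)$ and claim that $A$, ``a finite union of points and intervals of $\R$,'' is a regular image of $\R$ or $\R^2$. The normalization $\varphi:\R\PP^1\to C_{(1)}$ is only birational, not injective: over a singular point of $C$ (say a node) it is two-to-one, so for $S$ an arc through such a point $\varphi^{-1}(S)$ is a \emph{disconnected} union of intervals. A disconnected set is never a regular (or polynomial) image of any $\R^r$, since $\R^r$ is connected; so the reduction to the one-variable lemma breaks down exactly there, and Lemma \ref{interval} (which treats a single interval) together with Propositions \ref{p12}/\ref{r12} cannot repair it --- invoking Proposition \ref{r12} here would in any case be circular, as its proof rests on Theorem \ref{dim1r}. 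The missing ingredient is precisely the irreducibility hypothesis in (ii), which your argument never uses in this direction: by \cite[3.5]{fg3}, irreducibility of $S$ (in the Nash sense) guarantees that one can choose a \emph{single connected} interval $I\subset\R\PP^1$ with $\varphi(I)=S$ (a distinguished one-dimensional component of $\varphi^{-1}(S)$ that already surjects onto $S$), rather than the full preimage. With that $I$ in hand --- after moving $p_\infty$ off $\varphi^{-1}(S)$ when $S\neq C_{(1)}$, and using Example \ref{s1rp1} when $S=C_{(1)}$ --- Lemma \ref{interval} applies and gives ${\rm r}(S)\le 2$; your remaining remarks about charts and denominators are then routine, as you say.
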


\begin{prop}\label{r12}
Let $S\subset\R^m$ be a $1$-dimensional semialgebraic set which is a regular image of $\R^n$ for some $n\geq1$. Then, ${\rm r}(S)=1$ if and only if either 
\begin{itemize}
\item[(i)] $\cl_{\R\PP^m}(S)=S$ or 
\item[(ii)] $\cl_{\R\PP^m}(S)\setminus S=\{p\}$ is a singleton and the analytic closure of the germ $S_p$ is irreducible. 
\end{itemize}
\end{prop}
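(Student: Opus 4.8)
The plan is to combine the characterization of $1$-dimensional regular images (Theorem \ref{dim1r}) with an analysis of what a regular map from $\R$ can and cannot achieve. One direction is to show that, under either (i) or (ii), one can realize $S$ as a regular image of $\R$; the other is to show that if $S={\rm r}(S)=1$ then (i) or (ii) must hold. For the forward implication, suppose ${\rm r}(S)=1$, so $S=f(\R)$ for a regular map $f:\R\to\R^m$. Since $f$ extends to a regular (hence continuous) map $\bar f:\R\PP^1\to\R\PP^m$ — one clears denominators and homogenizes — the image $\bar f(\R\PP^1)$ is a compact connected semialgebraic curve containing $S$, and in fact equals $\cl_{\R\PP^m}(S)$ because $\bar f(\R\PP^1)\setminus f(\R)\subseteq\{\bar f(p_\infty)\}$ is at most one point. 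Thus $\cl_{\R\PP^m}(S)\setminus S$ has at most one point; if it is empty we are in case (i), and if it is a single point $\{p\}$ we must check that the analytic closure of the germ $S_p$ is irreducible. This last fact follows because $p=\bar f(p_\infty)$, and near $p_\infty$ the map $\bar f$ parametrizes a single analytic branch of the curve through $p$ (the image of a small arc around $p_\infty$), so $S_p$ is contained in — and, being $1$-dimensional, must coincide with the underlying set of — one irreducible analytic branch.

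For the reverse implication, assume $S$ is a $1$-dimensional regular image of some $\R^n$, so by Theorem \ref{dim1r} it is irreducible and $C:=\cl_{\R\PP^m}^{\zar}(S)$ is a rational curve; let $\varphi:\R\PP^1\to C_{(1)}$ be a birational (regular) parametrization and let $C_1:=\varphi(\R\PP^1)$, which is a compact connected curve with $S\subseteq C_1$. In case (i), $S=\cl_{\R\PP^m}(S)$ is itself compact, hence $S=C_1$ up to finitely many points; more carefully, one should arrange that $\varphi^{-1}(S)$ is all of $\R\PP^1$, or pre-compose with a regular self-map of $\R\PP^1$ (equivalently of $\R$, after a suitable chart) that collapses the complementary arcs — here the point is that a closed semialgebraic arc in $C_1$ can be hit by a regular map from $\R$, which is essentially the statement that $[0,1]$, or a closed interval, is a regular image of $\R$ (this uses a standard trick, e.g. composing with $\t\mapsto\t/(1+\t^2)$ and a polynomial, and is the regular analogue of the elementary Lemma \ref{interval}). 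In case (ii), write $\{p\}=\cl_{\R\PP^m}(S)\setminus S$; the analytic closure $\bar S_p$ of the germ at $p$ is irreducible, meaning $\varphi^{-1}$ of a neighborhood of $p$ in $\bar S$ is a single arc around one point $t_0\in\R\PP^1$, and $S$ corresponds under $\varphi$ to $C_1$ minus a ``one-sided'' half-open piece near $t_0$ together with possibly other adjustments away from $p$. The idea is then to choose the chart on $\R\PP^1$ so that $t_0=p_\infty$ and to build a regular map $\R\to\R\PP^1$, $\t\mapsto\psi(\t)$, with $\psi(\R)$ equal to the relevant half-open arc; composing $\varphi\circ\psi$ (after checking it lands in $\R^m$, i.e. avoids $\mathsf{H}_\infty$) yields $S$ as a regular image of $\R$, so ${\rm r}(S)=1$.

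The delicate points, and the ones I expect to be the real obstacle, are two. First, translating the geometric hypothesis ``the analytic closure of $S_p$ is irreducible'' into a clean combinatorial statement about the finitely many preimages $\varphi^{-1}(p)\subset\R\PP^1$ and the arcs of $\R\PP^1$ mapping into or out of $S$: irreducibility of the germ forces exactly one preimage point to be a ``boundary'' point of $\varphi^{-1}(\Cl(S))$ versus $\varphi^{-1}(S)$, and one must rule out the case of two distinct branches at $p$ (which would make $S_p$ reducible and obstruct a single regular parametrization from $\R$). Second, and more technically, one must exhibit regular surjections of $\R$ onto the appropriate pieces of $\R\PP^1$ — a closed arc, or a half-open arc with exactly one endpoint removed — while ensuring the composite $\varphi\circ\psi$ never meets the hyperplane at infinity (so that it is a genuine regular map into $\R^m$); this is where one uses that $S\subset\R^m$ is unbounded or bounded appropriately and may need to perturb the chart. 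Conversely, the only-if direction's subtle step is arguing that a regular $\bar f:\R\PP^1\to\R\PP^m$ cannot produce more than one point of $\Cl(S)\setminus S$ and cannot produce a reducible germ there: this is where the fact that $\R\PP^1\setminus\{p_\infty\}$ is connected, and that $\bar f$ restricted to a punctured neighborhood of $p_\infty$ has connected image with one limit point, does the work. Once these local analytic-branch bookkeeping facts are pinned down, the rest is assembling the parametrizations, which is routine given Theorem \ref{dim1r} and the interval lemmas.
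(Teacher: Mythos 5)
Your forward implication is essentially sound and in fact somewhat more direct than the paper's: you extend $f$ to a regular map $\bar f:\R\PP^1\to\R\PP^m$ and read off everything from the single point $p_\infty$, whereas the paper first passes to the complex extension $F:\C\PP^1\to\cl_{\C\PP^m}^{\zar}(S)$, factors it through an invariant normalization as $F=\Pi\circ\widetilde F$, and uses Lemma \ref{interval} to list the possible images $\widetilde f(\R)$. Two small points you should make explicit: (a) that every point of $S$ sufficiently close to $p$ is $f(t)$ for $|t|$ large (otherwise a bounded sequence of parameters would force $p\in S$), so that $S_p$ really is the image of the germ of $\R\PP^1$ at $p_\infty$ and no other preimage of $p$ contributes; and (b) the extension $\bar f$ exists not merely by ``clearing denominators'' (the homogenized tuple may have base points) but by the standard extension result for rational maps on smooth curves (Lemma \ref{curve}). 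Both are easily repaired.

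The reverse implication, however, has a genuine gap. Your argument tacitly assumes that $S$ is the image under the parametrization $\varphi$ of a \emph{single} arc of $\R\PP^1$ (all of $\R\PP^1$, a closed arc, or a half-open arc), and your intermediate claim in case (i) that ``$S=C_1$ up to finitely many points'' is simply false ($S$ can be a short closed sub-arc of $C_{(1)}$). Since $\varphi$ is only birational, $\varphi^{-1}(S)$ can a priori be a disjoint union of several arcs that are glued downstairs over singular points of the curve, and then no single arc of $\R\PP^1$ maps onto $S$; parametrizing one component, or collapsing the others, does not produce $S$. Ruling this out is not a local bookkeeping issue at the distinguished point $p$ of condition (ii): it is a global statement that uses the Nash-irreducibility of $S$ (which holds because $S$ is a regular image), and it is exactly what the paper imports from \cite[3.5]{fg3}: an irreducible semialgebraic subset of the curve is the image of a \emph{connected} subset $I$ of the normalization. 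Once that is granted, the case analysis you outline ($I=\R\PP^1$, $[0,1]$, $[0,1)$ or $\R$, the last two forced by the irreducibility of the analytic closure of $S_p$, which makes $(\pi|_{\cl(I)})^{-1}(p)$ a single point) goes through, and Lemma \ref{interval} together with Example \ref{s1rp1} finishes the proof as you indicate. Without it, the plan stalls precisely at the step you describe as ``assembling the parametrizations''.
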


\begin{cor}\label{nor1p2}
There is no $1$-dimensional semialgebraic set $S\subset\R^m$ with ${\rm p}(S)=2$ and ${\rm r}(S)=1$.
\end{cor}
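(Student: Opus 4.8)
The plan is to argue by contradiction, deriving the incompatibility of the two hypotheses ${\rm p}(S)=2$ and ${\rm r}(S)=1$ from Theorem \ref{dim1p}, Proposition \ref{p12} and Proposition \ref{r12}, together with the compactness of $\R\PP^m$. So I would assume there is a $1$-dimensional semialgebraic set $S\subset\R^m$ with ${\rm p}(S)=2$ and ${\rm r}(S)=1$. Since ${\rm p}(S)$ is finite, $S$ is a polynomial image of some $\R^n$, hence by Theorem \ref{dim1p} it is unbounded in $\R^m$. The first point to record is that this forces $\cl_{\R\PP^m}(S)\cap\mathsf{H}_\infty(\R)\neq\emptyset$: indeed $\R\PP^m$ is compact, so $\cl_{\R\PP^m}(S)$ is compact, and a sequence $\{s_k\}\subset S$ with $\|s_k\|\to+\infty$ has, written in normalized homogeneous coordinates, a subsequence converging in $\R\PP^m$ to a point whose $0$-th coordinate vanishes, i.e. to a point of $\cl_{\R\PP^m}(S)$ lying on $\mathsf{H}_\infty(\R)$.

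Next I would exploit the hypothesis ${\rm r}(S)=1$ via Proposition \ref{r12} and split into its two alternatives. If $\cl_{\R\PP^m}(S)=S$, then $\cl_{\R\PP^m}(S)=S\subset\R^m=\R\PP^m\setminus\mathsf{H}_\infty(\R)$ is disjoint from $\mathsf{H}_\infty(\R)$, contradicting the previous paragraph (equivalently, $S$ would be a compact, hence bounded, subset of $\R^m$). If instead $\cl_{\R\PP^m}(S)\setminus S=\{q\}$ is a singleton, then, since $S$ itself misses $\mathsf{H}_\infty(\R)$ while $\cl_{\R\PP^m}(S)$ does not, the point $q$ must lie on $\mathsf{H}_\infty(\R)$; in particular $q\notin\R^m$. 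As $\R^m$ is open in $\R\PP^m$ this gives $\cl_{\R^m}(S)=\cl_{\R\PP^m}(S)\cap\R^m=(S\cup\{q\})\cap\R^m=S$, so $S$ is closed in $\R^m$.

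In this last case Proposition \ref{p12}, applied to the $1$-dimensional polynomial image $S$, yields ${\rm p}(S)=1$, contradicting ${\rm p}(S)=2$; thus in both cases we reach a contradiction and the corollary follows. I do not expect a genuine obstacle here: once Theorem \ref{dim1p} and Propositions \ref{p12}, \ref{r12} are available the argument is a short piece of topological bookkeeping, and the only step needing a little care is noticing that the unboundedness of a polynomial image forces the missing boundary point $q$ of Proposition \ref{r12}(ii) to sit at infinity rather than in the affine chart, so that $S$ ends up closed in $\R^m$. Note also that only the set-theoretic part of Proposition \ref{r12}(ii) is used, not the irreducibility of the analytic closure of $S_q$.
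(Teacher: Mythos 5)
Your proof is correct and uses exactly the same ingredients as the paper (Theorem \ref{dim1p} for unboundedness, Proposition \ref{p12} for the link between closedness and ${\rm p}=1$, Proposition \ref{r12} for the structure of $\cl_{\R\PP^m}(S)\setminus S$); the paper merely phrases the contradiction as ``$\cl_{\R\PP^m}(S)\setminus S$ contains at least two points (one at infinity, one affine) yet is at most a singleton,'' whereas you case-split on Proposition \ref{r12} and conclude via Proposition \ref{p12} that ${\rm p}(S)=1$. This is an essentially identical argument, just organized slightly differently.
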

\begin{proof}
Suppose, by way of contradiction, that there exists a semialgebraic set $S\subset\R^m$ with $\dim S=1$, ${\rm p}(S)=2$ and ${\rm r}(S)=1$. By Theorem \ref{dim1p} and Proposition\ref{p12}, we deduce that $S$ is unbounded and $S$ is not closed in $\R^m$. Thus, $\cl_{\R\PP^m}(S)\setminus S$ has at least two elements: one point in $\mathsf{H}_\infty(\R)$ because $S$ is unbounded and another one in $\R^m$, since $S$ is not closed in $\R^m$. But by Proposition \ref{r12}, $\cl_{\R\PP^m}(S)\setminus S$ is either empty or a singleton, a contradiction; hence, the statement follows.
\end{proof}

\section{Main tools}\label{s2}

In this section, we present the main tools used to prove the results presented in this article. In what follows, we will use freely usual concepts of (complex) Algebraic Geometry as: rational map, regular map, normalization, etc. and we refer the reader to \cite{m,sh1} for further details. We recall first here the following useful and well-known fact concerning the regularity of rational maps defined on a nonsingular curve (see \cite[7.1]{m}) that will be used several times in the sequel.

\begin{lem}\label{curve}
Let $Z\subset\C\PP^n$ be a nonsingular projective curve and let $F:Z\dashrightarrow\C\PP^m$ be a rational map. Then, $F$ extends (uniquely) to a regular map $F':Z\to\C\PP^m$. Moreover, if $Z,F$ are invariant so is $F'$.
\end{lem}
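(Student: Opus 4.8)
The plan is to reduce everything to the standard fact that a rational map from a nonsingular curve to a projective space is a morphism, and then to handle the invariance under complex conjugation by a symmetry argument. First I would recall the classical valuative/local statement: if $Z$ is a nonsingular projective curve over $\C$ and $F\colon Z\dashrightarrow\C\PP^m$ is a rational map, then for each point $p\in Z$ the local ring $\mathcal O_{Z,p}$ is a discrete valuation ring, so one may clear the common factor of minimal valuation among the homogeneous coordinates $F=(F_0:\cdots:F_m)$ in a neighbourhood of $p$; this produces a representative of $F$ that is defined at $p$ and not all zero there, hence $F$ extends regularly across $p$. Doing this at every point of the (finite) indeterminacy locus yields a regular map $F'\colon Z\to\C\PP^m$, and uniqueness is immediate because $Z$ is irreducible and reduced, so two morphisms agreeing on a dense open set agree everywhere. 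This is exactly the content cited from \cite[7.1]{m}, so I would simply invoke it.

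For the invariance statement, suppose in addition that $Z$ is an invariant subvariety of $\C\PP^n$ (so $\sigma_n(Z)=Z$) and that $F$ is invariant, meaning $F\circ\sigma_n|_Z=\sigma_m\circ F$ as rational maps. Consider the map $G:=\sigma_m\circ F'\circ\sigma_n|_Z\colon Z\to\C\PP^m$. Since $\sigma_n$ and $\sigma_m$ are homeomorphisms (indeed real-algebraic isomorphisms of the underlying real varieties) and $F'$ is regular, $G$ is again a regular map $Z\to\C\PP^m$ — one should note $\sigma_n$ is anti-holomorphic, but composing two anti-holomorphic maps with a holomorphic one gives a holomorphic map, and likewise it preserves the algebraic structure over $\R$. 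On the dense open set where $F$ is defined, $G$ agrees with $\sigma_m\circ F\circ\sigma_n = F$ (using invariance of $F$), hence $G$ agrees with $F'$ there. By the uniqueness part already established, $G=F'$ on all of $Z$, i.e. $\sigma_m\circ F'\circ\sigma_n|_Z = F'$, which says precisely that $F'$ is invariant.

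The only delicate point is making sure the extension $F'$ produced in the first step is canonical, so that the symmetry argument in the second step has something well-defined to latch onto; this is guaranteed by the uniqueness clause, so there is essentially no obstacle — the lemma is a packaging of standard facts. If one prefers to avoid citing \cite{m} as a black box, the mild bookkeeping is checking that clearing the minimal-valuation factor is independent of the chosen local parameter and glues over overlaps, which is routine for curves since any two local representatives differ by a unit of the DVR.
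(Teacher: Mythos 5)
Your proposal is correct and matches what the paper does: the paper states this lemma without proof, citing \cite[7.1]{m} for the extension via exactly the discrete-valuation-ring argument you describe, and the invariance clause is handled by the same symmetry-plus-uniqueness observation (that $\sigma_m\circ F'\circ\sigma_n$ is again regular, agrees with $F$ on a dense open set, and hence equals $F'$). Nothing is missing.
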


\subsection*{Normalization of an algebraic curve} A main tool will be the normalization $(\widetilde{X},\Pi)$ of an either affine or projective algebraic curve $X$, both in the real and in the complex case. As it is well-know, the normalization is birationally equivalent to $X$ and so unique up to biregular homeomorphism; furthermore, if $X$ is an invariant complex algebraic curve, we may assume that also both $\widetilde{X}$ and $\pi$ are invariant. To prove this, one can construct $(\widetilde{X},\pi)$ as the desingularization of $X$ via a finite chain of suitable invariant blowing-ups. Recall also that all the fibers of $\Pi:\widetilde{X}\to X$ are finite and if $x\in X$ is a non singular point, then the fiber of $x$ is a singleton. Moreover, if $X$ is complex, then the cardinal of the fiber of a point $x\in X$ coincides with the number of irreducible components of the germ $X_x$. If $X\subset\R^m$ is an affine algebraic curve, $Y:=\cl_{\C\PP^m}^{\zar}(X)$ and $(\widetilde{Y}\subset\C\PP^k,\Pi)$ is an invariant normalization of $Y$, we have that: 
\begin{itemize}
\item $(\widetilde{Z}:=\widetilde{Y}\cap\R\PP^k,\Pi|_{\widetilde{Z}})$ is the normalization of $Z:=\cl_{\R\PP^m}^{\zar}(X)$ and $\Pi(\widetilde{Z})=Z_{(1)}$,
\item $(\widetilde{X}:=\widetilde{Y}\cap\R^k,\pi:=\Pi|_{\widetilde{X}})$ is the normalization of $X$ and $\pi(\widetilde{X})=X_{(1)}$.\qed
\end{itemize}

\vspace{2mm}
Next two results are the clue to prove the Main results stated in the Introduction.

\begin{lem}\label{fact1}
Let $f:\R\to\R^m$ be a non-constant rational map and let $S:=f(\R)$. Then, 
\begin{itemize}
\item[(i)] $f$ extends (uniquely) to an invariant regular map $F:\C\PP^1\to\C\PP^m$ such that $F(\C\PP^1)=\cl_{\C\PP^m}^{\zar}(S)$.
\item[(ii)] $\cl_{\C\PP^m}^{\zar}(S)$ is an invariant rational curve and if $(\C\PP^1,\Pi)$ is an invariant normalization of $\cl_{\C\PP^m}^{\zar}(S)$ there is an invariant surjective regular map $\widetilde{F}:\C\PP^1\to\C\PP^1$ such that $F=\Pi\circ\widetilde{F}$.
\item[(iii)] If $f$ is polynomial, then we may choose $\Pi$ and $\widetilde{F}$ so that $\pi:=\Pi|_{\R}$ and $\widetilde{f}:=\widetilde{F}|_{\R}$ are polynomial. In particular, $\cl_{\C\PP^m}^{\zar}(S)\cap\mathsf{H}_\infty(\C)$ is a singleton $p$ and the germ $\cl_{\C\PP^m}^{\zar}(S)_p$ is irreducible.
\end{itemize}
\end{lem}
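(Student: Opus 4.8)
The three parts are proved in order, the later ones resting on the construction in (i); the tools are Lemma \ref{curve} (which makes a rational map out of a smooth projective curve automatically regular), the completeness of $\C\PP^1$, and the universal property of the normalization.

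For (i), I would write $f_i=g_i/h_i$ with $g_i,h_i\in\R[\x]$ and $h_i$ without real zeros, put $f$ over a common denominator $f=(p_1/h,\ldots,p_m/h)$ with $h,p_i\in\R[\x]$ and $h$ nowhere zero on $\R$, and homogenize everything to the common degree $d:=\max\{\deg h,\deg p_1,\ldots,\deg p_m\}$ with respect to the coordinate $x_0$ of $\C\PP^1$:
$$
F:=\big(x_0^dh(x_1/x_0):x_0^dp_1(x_1/x_0):\cdots:x_0^dp_m(x_1/x_0)\big).
$$
Since the data are real, this is an invariant rational map $\C\PP^1\dashrightarrow\C\PP^m$, which Lemma \ref{curve} upgrades to a unique invariant regular map $F:\C\PP^1\to\C\PP^m$; on $\R=\R\PP^1\setminus\{p_\infty\}$ it restricts to $f$ because $h$ is nowhere zero there. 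As $\C\PP^1$ is complete, $F(\C\PP^1)$ is a closed irreducible subset of $\C\PP^m$ containing $S=F(\R)$, so $\cl_{\C\PP^m}^{\zar}(S)\subseteq F(\C\PP^1)$; conversely $\R$ is Zariski dense in $\C\PP^1$, hence $F(\C\PP^1)\subseteq\cl_{\C\PP^m}^{\zar}(F(\R))=\cl_{\C\PP^m}^{\zar}(S)$, and equality follows. Uniqueness of the extension holds because two morphisms from the irreducible $\C\PP^1$ agreeing on the dense set $\R$ coincide.

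For (ii), $C:=F(\C\PP^1)=\cl_{\C\PP^m}^{\zar}(S)$ is an irreducible invariant projective curve; since $\C\PP^1$ is normal, $F$ lifts through the normalization $\widetilde{C}\to C$ to a nonconstant morphism $\C\PP^1\to\widetilde{C}$, and as such a morphism cannot increase the genus we get $\widetilde{C}\cong\C\PP^1$, i.e. $C$ is an invariant rational curve; fix an invariant normalization $\Pi:\C\PP^1\to C$. Lifting $F$ through $\Pi$ gives a rational map $\C\PP^1\dashrightarrow\C\PP^1$, which by Lemma \ref{curve} extends to a regular map $\widetilde{F}:\C\PP^1\to\C\PP^1$ with $\Pi\circ\widetilde{F}=F$ (equality on a dense set, hence everywhere); $\widetilde{F}$ is nonconstant, so surjective, and invariant --- again by Lemma \ref{curve}, or by uniqueness of the lift together with the invariance of $F$ and $\Pi$.

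For (iii), suppose $f$ is polynomial; then we may take $h=1$, so the first coordinate of $F$ above is just $x_0^d$ with $d=\max_i\deg f_i$. Because some $f_i$ has a nonzero coefficient in degree $d$, the forms defining $F$ have no common factor and no common zero, so $F$ is already a morphism and $F^{-1}(\mathsf{H}_\infty(\C))=\{x_0=0\}=\{p_\infty\}$. From $F=\Pi\circ\widetilde{F}$ with $\widetilde{F}$ surjective it follows that $\Pi^{-1}(\mathsf{H}_\infty(\C))$ is a single point $q_\infty=\widetilde{F}(p_\infty)$, hence $\cl_{\C\PP^m}^{\zar}(S)\cap\mathsf{H}_\infty(\C)=\Pi(\{q_\infty\})=\{p\}$ with $p=F(p_\infty)$ the real point whose coordinates are the top-degree coefficients of the $f_i$; moreover $\Pi^{-1}(p)=\{q_\infty\}$ is a singleton, so the germ $\cl_{\C\PP^m}^{\zar}(S)_p$ has one irreducible component, i.e. is irreducible, by the fact recalled above that the cardinality of a fiber of a complex normalization counts the branches of the germ --- this is the ``in particular'' statement. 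Finally, $p$ being real and $\Pi$ invariant, $q_\infty$ is a real point; choose an invariant automorphism $\rho$ of $\C\PP^1$ with $\rho(q_\infty)=p_\infty$ and replace $(\Pi,\widetilde{F})$ by $(\Pi\circ\rho^{-1},\rho\circ\widetilde{F})$. This preserves everything above and gives $\Pi^{-1}(\mathsf{H}_\infty(\C))=\widetilde{F}^{-1}(p_\infty)=\{p_\infty\}$, so $\Pi$ and $\widetilde{F}$ send $\C\PP^1\setminus\{p_\infty\}$ into the affine parts $\C^m$ and $\C$ respectively, and by invariance send $\R$ into $\R^m$ and $\R$; as morphisms of affine varieties defined over $\R$ they are given by polynomials with real coefficients, so $\pi=\Pi|_\R$ and $\widetilde{f}=\widetilde{F}|_\R$ are polynomial. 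The step I expect to be the main obstacle is this last one: identifying $F^{-1}(\mathsf{H}_\infty(\C))$ as a single, necessarily real, point and rigidifying the choice of invariant normalization so that ``infinity maps to infinity''; with that in hand the polynomiality of $\pi,\widetilde{f}$ and the irreducibility of the germ at $p$ are immediate, and the rest is a routine assembly of Lemma \ref{curve}, completeness of $\C\PP^1$, and the universal property of normalization.
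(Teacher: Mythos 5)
Your proof is correct, and parts (i) and (ii) follow the paper's route almost verbatim (homogenize, invoke Lemma \ref{curve}, use completeness of $\C\PP^1$ for the image, lift through an invariant normalization and kill the genus). Where you genuinely diverge is in (iii), at the one step that carries all the weight: showing that the normalization map sends a single real point to infinity. The paper proves this by explicit computation: it factors the denominator form $\Pi_0$ into linear factors over $\C$, plugs in the components of $\widetilde F$ to get $\Pi_0(\widetilde F_0,\widetilde F_1)=\x_0^d$, and argues that two non-proportional factors would force $\widetilde F$ to be constant; this yields the normal forms $\Pi_0=\lambda\x_0^e$ and $\widetilde F_0=\mu\x_0^\ell$ with $d=e\ell$ after a real change of coordinates. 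You instead observe that for polynomial $f$ the forms defining $F$ have no common zero and $F^{-1}(\mathsf H_\infty(\C))=\{p_\infty\}$, then use surjectivity of $\widetilde F$ to conclude that $\Pi^{-1}(\mathsf H_\infty(\C))$ is the single real point $\widetilde F(p_\infty)$, move it to $p_\infty$ by a real automorphism, and get polynomiality of $\pi$ and $\widetilde f$ for free from the fact that a morphism $\A^1\to\A^m$ defined over $\R$ is a real polynomial map. Your version is more geometric and avoids the factorization entirely; the paper's version is more computational but hands you the explicit degrees $e,\ell$ with $d=e\ell$ (not needed for the statement). Both derive the ``in particular'' clause the same way, from $\Pi^{-1}(p)$ being a singleton and the branch-counting property of the normalization fiber recalled in Section \ref{s2}.
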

\begin{proof}
(i) Observe that $f$ extends naturally to an invariant rational map 
$$
F:=(F_0:F_1:\cdots:F_m):\C\PP^1\dashrightarrow\C\PP^m,
$$
where $F_i\in\R[\x_0,\x_1]$ are homogeneous polynomials of the same degree $d$. In fact, such extension is, by Lemma \ref{curve}, regular and unique. Moreover, since $S=f(\R)$, then $F(\C\PP^1)\subset\cl_{\C\PP^m}^{\zar}(S)$ contains, by \cite[2.31]{m}, a non-empty Zariski open subset of the $\cl_{\C\PP^m}^{\zar}(S)$; hence, since $F$ is proper and $\cl_{\C\PP^m}^{\zar}(S)$ is irreducible, we conclude, by \cite[2.33]{m}, that $F(\C\PP^1)=\cl_{\C\PP^m}^{\zar}(S)$.

(ii) Let $(\widetilde{Y}\subset\C\PP^k,\Pi)$ be a $\sigma$-invariant normalization of $Y:=\cl_{\C\PP^m}^{\zar}(S)$. Now, the composition $\Pi^{-1}\circ F:\C\PP^1\dashrightarrow \widetilde{Y}$ defines an invariant rational map that extends to an invariant surjective regular map $\widetilde{F}:\C\PP^1\to\widetilde{Y}$ such that $F=\Pi\circ\widetilde{F}$. Now, by \cite[7.6, 7.20]{m}, $\widetilde{Y}$ is a smooth curve of arithmetic genus $0$, that is, a smooth rational curve (see \cite[7.17]{m}); hence, we may take $\widetilde{Y}=\C\PP^1$. Thus, $(\R\PP^1,\Pi|_{\R\PP^1})$ is the normalization of $\cl_{\R\PP^m}^{\zar}(S)$ and $\Pi(\R\PP^1)=\cl_{\R\PP^m}^{\zar}(S)_{(1)}$.

(iii) If $f$ is polynomial, then $F_0:=\x_0^d$. Write $\Pi:=(\Pi_0,\ldots,\Pi_m)$ and $\widetilde{F}:=(\widetilde{F}_0,\widetilde{F}_1)$, where $\Pi_i,\widetilde{F}_j\in\R[\x_0,\x_1]$ are homogeneous polynomials, and let us check that we may assume $\Pi_0=\lambda\x_0^e$ and $\widetilde{F}_0=\mu\x_0^\ell$ for some positive integers $e,\ell$ such that $d=e\ell$; hence, $\pi:=\Pi|_{\R}$ and $\widetilde{f}:=\widetilde{F}|_{\R}$ are polynomial. 

Indeed, observe first that $\widetilde{F}$ is non constant because it is surjective. Factorize
$$
\Pi_0=\prod_{i=1}^e(a_i\x_1-b_i\x_0)\in\C[\x_0,\x_1]
$$
where $a_i,b_i\in\C$ and $(a_i,b_i)\neq(0,0)$ for $i=1,\ldots,m$. Let us check that all the factors $a_i\x_1-b_i\x_0$ are proportional. Denote $p_i:=\widetilde{F}_i(1,\x_1)$ and observe that
$$
\prod_{i=1}^e(a_ip_1-b_ip_0)=\Pi_0(p_0,p_1)=F_0(1,\x_1)=1;
$$
hence, all the factors in the previous expression are non zero constants $c_i\in\C$. Suppose that two of the pairs $(a_i,b_i)$ are not proportional, for instance, $(a_1,b_1)$ and $(a_2,b_2)$ are not proportional. Then, $(p_0,p_1)$ is the unique solution of the linear system 
$$
\left\{
\begin{array}{l}
a_1\x_1-b_1\x_0=c_1,\\
a_1\x_2-b_2\x_0=c_2
\end{array}\right.
$$
and so $p_0,p_1\in\C$, which contradicts the fact that $\widetilde{F}$ is non constant. Thus, we may write $\Pi_0=\pm(a\x_1-b\x_0)^e$ where $a,b\in\R$ and $(a,b)\neq(0,0)$. Consider an invariant change of coordinates $\Psi:\C\PP^1\to\C\PP^1$ that transforms $(a:b)$ onto $(0:1)$ and define $\Pi':=\Pi\circ\Psi$. Of course, $(\C\PP^1,\Pi')$ is an invariant normalization of $\cl_{\C\PP^m}^{\zar}(S)$ with $\Pi'_0=\lambda\x_0^e$. Define $\widetilde{F}'$ as the regular extension of $(\Pi')^{-1}\circ F$ to $\C\PP^1$; in particular, $F=\Pi'\circ\widetilde{F}'$. Since $\lambda(\widetilde{F}'_0)^e=\x_0^d$, we conclude $\widetilde{F}_0'=\mu\x_0^\ell$.

Finally, we have that $\Pi^{-1}(\cl_{\C\PP^m}^{\zar}(S)\cap\mathsf{H}_\infty(\C))=\{(0:1)\}$ and so, we deduce that $\cl_{\C\PP^m}^{\zar}(S)\cap\mathsf{H}_\infty(\C)=\{p\}$ is a singleton and that the germ $\cl_{\C\PP^m}^{\zar}(S)_p$ is irreducible.
\end{proof}

\begin{lem}\label{fact2}
Let $f:=(f_1,\ldots,f_m):\R^n\to\R^m$ be a non-constant rational map such that its image $f(\R^n)$ has dimension $1$. Then,
\begin{itemize}
\item[(i)] $f$ factors through $\R$, that is, there exist a rational function $g\in\R(\x)$ and a rational map $h:\R\to\R^m$ such that $f=h\circ g$.
\item[(ii)] If $f$ is moreover a polynomial map, we may also take $g$ and $h$ polynomial.
\end{itemize}
\end{lem}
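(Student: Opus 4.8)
The plan is to exploit the fact that the image $f(\R^n)$ is a $1$-dimensional semialgebraic set whose Zariski closure $C:=\cl_{\R\PP^m}^{\zar}(f(\R^n))$ is an irreducible curve (irreducibility follows because $f(\R^n)$, being the continuous semialgebraic image of a connected irreducible set, is itself irreducible, so its Zariski closure is irreducible). First I would pass to the complex picture: let $Y:=\cl_{\C\PP^m}^{\zar}(f(\R^n))$, an invariant irreducible complex curve, and let $(\widetilde{Y},\Pi)$ be an invariant normalization of $Y$, which I may take to be a nonsingular invariant projective curve. The rational map $f$ extends to an invariant rational map $F:\C\PP^n\dashrightarrow\C\PP^m$ whose image is a Zariski-dense subset of $Y$; then $\Pi^{-1}\circ F:\C\PP^n\dashrightarrow\widetilde{Y}$ is an invariant rational map. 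Since $\widetilde{Y}$ is a nonsingular curve, standard theory (the valuative-type criterion / Lemma \ref{curve} applied after restricting to a suitable invariant resolution of the indeterminacy locus, or simply the fact that a rational map from a smooth variety to a curve is everywhere defined in codimension $\leq 1$) lets me control the indeterminacy; the real content is that the function field $\C(\widetilde{Y})$ embeds as a subfield of $\C(\x_1,\dots,\x_n)$, and this subfield is generated by a single rational function by Lüroth's theorem, since $\widetilde{Y}$ is a rational curve (here I use that $f(\R^n)$ has a rational Zariski closure — in the regular case this is part of the hypothesis package, and in general one observes $Y=F(\C\PP^n)$ is unirational hence rational as a curve).

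Concretely, the inclusion of fields $\C(\widetilde{Y})\hookrightarrow\C(\x_1,\dots,\x_n)$ induced by $\Pi^{-1}\circ F$ identifies $\C(\widetilde{Y})\cong\C(\t)$ for a single indeterminate $\t$, and the image of $\t$ under this embedding is a rational function $g_0\in\C(\x_1,\dots,\x_n)$. Because $F$ and $\Pi$ are invariant (defined over $\R$), the whole construction descends: $\C(\widetilde{Z})\cong\C(\t)$ is stable under conjugation with $\widetilde{Z}=\widetilde{Y}\cap\R\PP^k$, so I can arrange $g_0$ to lie in $\R(\x_1,\dots,\x_n)$ (replacing $g_0$ by a suitable real Möbius transform of it, or by $(g_0+\overline{g_0})/2$ together with a coordinate change on $\widetilde Z\cong\R\PP^1$, exactly as in the descent argument of Lemma \ref{fact1}(iii)). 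Set $g:=g_0\in\R(\x)$. The composite $\widetilde{Y}\xrightarrow{\ \Pi\ }Y\supset f(\R^n)$ together with the birational parametrization $\C\PP^1\cong\widetilde{Y}$ gives a rational map $h:\R\dashrightarrow\R^m$ — more precisely I take $h$ to be the real restriction of $\Pi$ composed with a real birational parametrization of $\widetilde Y$ — and by construction of $g$ the square commutes at the level of function fields, so $f=h\circ g$ as rational maps. One then checks that $h$ can be taken regular (defined on all of $\R$): $\widetilde Y\cong\C\PP^1$ and $\Pi$ is a morphism on the projective curve, so after choosing the parametrization the only possible pole of $h$ at a finite real point can be cleared exactly as in the proof of Lemma \ref{fact1}. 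This proves (i).

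For (ii), suppose $f$ is a polynomial map. Then $f$ extends to an invariant regular map on $\C\PP^n$ sending the hyperplane at infinity into the hyperplane at infinity of $\C\PP^m$, which forces $\Pi^{-1}\circ F$ to send $\mathsf{H}_\infty(\C)$ to a single point of $\widetilde Y\cong\C\PP^1$ lying over $Y\cap\mathsf{H}_\infty(\C)$; normalize so that this point is $p_\infty\in\C\PP^1$. Then the rational function $g_0$ generating the field, viewed now as a map $\C\PP^n\dashrightarrow\C\PP^1$, has its only pole-locus contained in $\mathsf{H}_\infty(\C)$, hence $g_0$ is a polynomial in $\x_1,\dots,\x_n$; the same real-descent argument keeps $g\in\R[\x]$. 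Likewise, choosing the parametrization $\C\PP^1\cong\widetilde Y$ so that $p_\infty\mapsto$ the point of $Y$ over infinity, the map $h$ has no finite poles and sends $\infty$ to $\infty$, so $h$ is a polynomial map $\R\to\R^m$; and $f=h\circ g$ still holds since we only changed $g,h$ within the same field-theoretic picture.

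The main obstacle I anticipate is the descent to $\R$: ensuring that the single generator produced by Lüroth and the parametrization of the normalization can both be chosen invariant (i.e.\ defined over $\R$) simultaneously, and that the resulting $h$ is actually regular on all of $\R$ (respectively polynomial in case (ii)) rather than merely rational. This is where one must reuse the explicit bookkeeping from Lemma \ref{fact1}: factoring the relevant component of $\Pi$, showing its linear factors are proportional, and applying a real change of coordinates on $\C\PP^1$ to kill the remaining pole; the polynomial case of (ii) then follows by the same computation with ``degree $d$'' bookkeeping on the homogeneous components, as the equation $\lambda(\widetilde F_0')^e=\x_0^d$ there shows. A secondary, more routine point is verifying that $f(\R^n)$ (as opposed to its complexification) is irreducible in the required sense so that $C$ is genuinely irreducible over $\R$; this is handled by the observation already recorded in the introduction that regular (indeed continuous semialgebraic) images of $\R^n$ are irreducible.
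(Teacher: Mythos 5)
Your route is genuinely different from the paper's: you work geometrically through the invariant normalization $\Pi:\widetilde Y\to Y=\cl_{\C\PP^m}^{\zar}(f(\R^n))$ and Lüroth over $\C$ followed by descent, whereas the paper works purely algebraically inside $\R(\x)$: it applies Lüroth directly over $\R$ to the subfield $\F=\R(f_1,\dots,f_m)$ of transcendence degree $1$ to get $\F=\R(g)$ (so part (i) is three lines), and for (ii) it invokes Schinzel's lemma to replace the Lüroth generator by a polynomial one and then a Bézout identity $1=P_iA_i+Q_iB_i$ evaluated at $g$ to see that each denominator $Q_i(g)$ is a nonzero constant. For part (i) your argument is workable in outline, but the descent step is the soft spot and one of your proposed fixes is wrong: $(g_0+\overline{g_0})/2$ does not in general generate the same subfield. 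The correct justification is that $\widetilde Z=\widetilde Y\cap\R\PP^k$ is a real form of $\PP^1_\C$ with real points (it surjects onto the $1$-dimensional set $S_{(1)}$), hence is $\PP^1_\R$, so $\R(\widetilde Z)$ is purely transcendental and a real generator exists; you should say this rather than gesture at averaging.

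The genuine gap is in part (ii), in the claim that $h$ is polynomial. For that you need $\Pi^{-1}(Y\cap\mathsf H_\infty(\C))$ to be the single point $p_\infty$, i.e.\ that $Y$ meets infinity in one point whose germ is irreducible. You assert that polynomiality of $f$ ``forces'' this, but your argument only controls the image under $G:=\Pi^{-1}\circ F$ of $\mathsf H_\infty(\C\PP^n)$ off the indeterminacy locus of $F$ (that image is indeed one point, by connectedness of $\mathsf H_\infty\setminus\{G_0=G_1=0\}$). It does not control the images of the exceptional divisors over the indeterminacy locus once you resolve $F$, and those can a priori hit further points of $\Pi^{-1}(\mathsf H_\infty)$; since $G$ resolved is surjective onto $\C\PP^1$, nothing you have written excludes a second preimage $q\neq p_\infty$ of $\mathsf H_\infty$, which would give $h$ a finite pole. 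Note that the statement you are assuming is exactly the $n$-variable analogue of Lemma \ref{fact1}(iii), which the paper only obtains as a \emph{consequence} of the present lemma (factor through $\R$ first, then apply Lemma \ref{fact1} to the one-variable polynomial $h$); so as written your proof of (ii) presupposes what the lemma is needed to establish. Your argument that the Lüroth generator $g_0$ itself is a polynomial (empty pole locus in $\C^n$ plus coprimality and the Nullstellensatz) is fine; to close the gap you should then finish as the paper does, with the Bézout identity showing $Q_i(g)$ is a unit, rather than via the geometry of $\Pi$ at infinity.
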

\begin{proof}
Let $\F:=\R(f_1,\ldots,f_m)$ be the smallest subfield of the field of rational functions $\R(\x)$ in $n$ variables that contains $\R$ and $f_1,\ldots,f_m$. Note that $\tr\!.\!\deg(\F|\R)=\dim(\im f)=1$ we may assume that $f_1\not\in\R$. Thus, by L\"uroth's Theorem, there exists a rational function $g\in\R(\x)\setminus\R$ such that $\F=\R(g)$. Since $f_i\in\F=\R(g)$, we have $f_i=\frac{P_i(g)}{Q_i(g)}$ for some coprime polynomials $P_i,Q_i\in\R[\t]$. Now, the rational map $h:=(\frac{P_1}{Q_1},\ldots,\frac{P_m}{Q_m}):\R\to\R^m$, satisfies $f=h\circ g$ and so (i) holds.

Next, suppose that $f$ is moreover polynomial. Following \cite[Lem.2]{sch2} (see also \cite[Lem. 2, pag. 710-711]{sch}):

\vspace{1mm}
\begin{substeps}{fact2}\label{sch}
\em We may assume that the L\"uroth's generator $g$ of $\F$ is in fact polynomial. 
\end{substeps}

\vspace{1mm}
Now, by Bezout's Lemma, we can write $1=P_iA_i+Q_iB_i$ for some $A_i,B_i\in\R[\t]$. Substituting the variable $\t$ by $g$ we get the polynomial identity
$$
1=P_i(g)A_i(g)+Q_i(g)B_i(g)=Q_i(g)f_iA_i(g)+Q_i(g)B_i(g)=Q_i(g)\big(f_iA_i(g)+B_i(g)\big);
$$
hence, $Q_i(g)$ is a nonzero constant, and so the polynomials $h_i:=\frac{P_i(\t)}{Q_i(g)}$ fit our situation.

For the sake of completeness let us include the elementary proof of \cite[Lem.2]{sch2} that shows statement \ref{fact2}.\ref{sch}. Let $g_0\in\R(\x)\setminus\R$ be a L\"uroth's generator of $\F$. Since the extension $\F|\R$ has transcendence degree $1$, we may assume that $F:=f_1\in\R[\x]\setminus\R$. Let $R,S\in\R[\t]\setminus\{0\}$ and $P,Q\in\R[\x]\setminus\{0\}$ be pairs of relatively prime polynomials such that 
$$
F=\frac{R(g_0)}{S(g_0)}\quad\&\quad g_0=\frac{P}{Q}\ \Longrightarrow\ F=\frac{Q^rR(P/Q)}{Q^sS(P/Q)}Q^{s-r},
$$
where $r:=\deg(R)$ and $s:=\deg(S)$. Notice that the polynomials $Q$, $Q^rR(P/Q)$ and $Q^sS(P/Q)$ are pairwise relatively prime; once this is show, it follows directly, using the fact that $\R[\x]$ is a UFD, that $\beta:=Q^sS(P/Q)\in\R$ and $s-r\geq0$. 

Indeed, it is straightforward to show, using that $R,S\in\R[\t]$, that 
$$
\gcd(Q,Q^rR(P/Q))=\gcd(Q,Q^sS(P/Q))=\gcd(P,Q)=1.
$$
Next, by Bezout's Lemma, we find polynomials $A_1,A_2\in\R[\t]$ of degrees $k_i:=\deg(A_i)$ such that $1=A_1R+A_2S$ and substituting $\t\leadsto P/Q$ and multiplying the expresion by $Q^\ell$ where $\ell:=\max\{\deg(A_1)+\deg(R),\deg(A_2)+\deg(S)\}$, we get
$$
Q^{\ell}=Q^{\ell-k_1-r}(Q^{k_1}A_1(P/Q))(Q^rR(P/Q))+Q^{\ell-k_2-r}(Q^{k_2}A_2(P/Q))(Q^rS(P/Q))
$$
and so the $\gcd(Q^rR(P/Q),Q^sS(P/Q))$ divides $Q^\ell$; hence, 
$$
\gcd(Q^rR(P/Q),Q^sS(P/Q))=\gcd(Q^rR(P/Q),Q^sS(P/Q),Q^\ell)=1.
$$
Factorize $S=\alpha(\t-\xi_1)\cdots(\t-\xi_s)$ where $\alpha\in\R\setminus\{0\}$ and $\xi_i\in\C$. We have
$$
\beta=Q^sS(P/Q)=\alpha(P-\xi_1Q)\cdots(P-\xi_sQ);
$$
whence, $(P-\xi_iQ)=\gamma_i\in\C$ for $1\leq i\leq s$. If any two $\xi_i's$ were distinct, for instance, $\xi_1\neq\xi_2$, we would get $(\xi_2-\xi_1)Q=\gamma_1-\gamma_2\in\C$; hence, $Q\in\R[\x]\cap\C=\R$ and $P\in\R[\x]\cap\C=\R$, which contradicts the fact that $g_0=P/Q\in\R(\x)\setminus\R$. Thus, $S=\alpha(\t-\xi)^s$ where $\alpha,\xi\in\R$ and $s\geq0$.

If $s=0$, we may assume $Q=1$ and $g:=g_0=P\in\R[\x]$. If $s>0$, then $P-\xi Q=\gamma\in\R$ and so $g_0=P/Q=\xi+\gamma/Q$; hence, $\F=\R(g_0)=\R(\xi+\gamma/Q)=\R(\gamma/Q)=\R(g)$, where $g:=Q\in\R[\x]$, as wanted. 
\end{proof}

We finish this section with an elementary crucial example.

\begin{example}\label{s1rp1}
$\sph^1$ and $\R\PP^1$ are regular images of $\R$. Since $\R\PP^1$ is the image of $\sph^1$ via the canonical projection $\pi:\sph^1\to\R\PP^1$, it is enough to prove that $\sph^1$ is a regular image of $\R$. To that end, we may take for instance the regular map
$$
f:\R\to\sph^1,\ t\mapsto\Big(\Big(\frac{t^2-1}{t^2+1}\Big)^2-\Big(\frac{2t}{t^2+1}\Big)^2,2\Big(\frac{t^2-1}{t^2+1}\Big)\Big(\frac{2t}{t^2+1}\Big)\Big).
$$
Observe that the previous map is the composition of the inverse of the stereographic projection of $\sph^1$ from $(1,0)$ with 
$$
g:\C\equiv\R^2\to\C\equiv\R^2,\ z=x+\sqrt{-1}y\equiv(x,y)\mapsto z^2\equiv(x^2-y^2,2xy).
$$
\end{example}

\section{Proofs of the main results}\label{s3}

The purpose of this section is to prove Theorems \ref{dim1p} and \ref{dim1r} and Propositions \ref{p12} and \ref{r12}. We begin approaching the case $m=1$, that is, $S:=I$ is an interval of $\R$.

\begin{lem}\label{interval}
Let $I\subset\R$ be an interval. Then, 
\begin{itemize}
\item[(i)] ${\rm p}(I)<+\infty$ if and only if $I$ is unbounded. Moreover, if such is the case ${\rm p}(I)\leq 2$ and ${\rm p}(I)=2$ if and only if $I\subsetneq\R$ is moreover open.
\item[(ii)] ${\rm r}(I)\leq 2$. Moreover, ${\rm r}(I)=2$ if and only if $I\subsetneq\R$ is open.
\end{itemize}
\end{lem}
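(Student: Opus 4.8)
The statement classifies the invariants $\mathrm{p}(I)$ and $\mathrm{r}(I)$ for an interval $I\subset\R$. My plan is to treat the upper bounds, the lower bounds, and the characterizations of when equality with $2$ holds separately, reducing everything to a handful of explicit maps and a few obstruction arguments.

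First I would dispose of the trivial cases: if $I$ is a single point it is the image of a constant map, so $\mathrm{p}(I)=\mathrm{r}(I)=1$; likewise $I=\R$ is the image of the identity. For the upper bound $\mathrm{p}(I)\le 2$ when $I$ is unbounded, I would exhibit explicit polynomial maps $\R^2\to\R$. The half-line $[0,+\infty)$ is $\{x^2:x\in\R\}$, hence even $\mathrm{p}([0,+\infty))=1$; the half-line $(0,+\infty)$ is the image of $(x,y)\mapsto x^2+y^2+\text{(something making it miss }0)$ — more carefully, $(0,+\infty)=\{x^2(x^2y^2+1):\,(x,y)\in\R^2\}$ or a similar two-variable construction, giving $\mathrm{p}((0,+\infty))\le 2$; and an unbounded interval of the form $[a,+\infty)$ or $(a,+\infty)$ reduces to these by a translation, while $(-\infty,a]$-type intervals reduce by $x\mapsto -x$. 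So in all unbounded cases $\mathrm{p}(I)\le 2$. For $\mathrm{r}(I)\le 2$ I would do the same: every interval is, up to an affine change of coordinates and possibly $x\mapsto -x$, one of $\R$, $[0,+\infty)$, $(0,+\infty)$, $[0,1)$, $(0,1)$, $[0,1]$. A closed bounded interval $[0,1]$ is a regular image of $\R$ via a bounded regular function with the right range (e.g.\ built from $\frac{t^2}{t^2+1}$ composed with an affine map, or from $\frac{1}{2}(1+\sin\text{-like rational expression})$ — concretely $t\mapsto \frac{t^2}{1+t^2}$ has image $[0,1)$, and to hit $1$ as well one squares the stereographic parametrization as in Example~\ref{s1rp1}); the half-open and open cases are handled by explicit regular functions $\R\to\R$ (for $[0,1)$ take $t\mapsto\frac{t^2}{1+t^2}$; for $(0,1)$ take $t\mapsto\frac{t^2+?}{\,?\,}$, or use a regular map $\R^2\to\R$). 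In each case one checks the image is exactly the claimed interval by elementary calculus on a rational function whose denominator has no real zero.

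Next, the lower bounds and the equality characterizations. For (i): if $I$ is bounded then $\mathrm{p}(I)=+\infty$, because a nonconstant polynomial map $\R^n\to\R$ has unbounded image (the composite with a generic line is a nonconstant one-variable polynomial, whose image is unbounded) — so a bounded $I$ with more than one point cannot be a polynomial image. If $I=\R$ we have $\mathrm{p}(I)=1$; if $I$ is unbounded but $I\ne\R$ and closed, then $\mathrm{p}(I)=1$ as well (the half-line cases above), and conversely if $\mathrm{p}(I)=1$ then $I=f(\R)$ for a polynomial $f:\R\to\R$, and the image of a polynomial in one variable is a closed interval (either $\R$, or $[a,+\infty)$, or $(-\infty,a]$), so $I$ is closed. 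Hence for unbounded $I\ne\R$, $\mathrm{p}(I)=2$ exactly when $I$ is not closed, i.e.\ when $I$ is open (an unbounded interval strictly contained in $\R$ is either a closed half-line or an open half-line). For (ii): if $I$ is closed — either $I=\R$, or a closed half-line, or a closed bounded interval — then $\mathrm{r}(I)=1$; the half-lines are polynomial images of $\R$, and for a closed bounded interval one uses a bounded regular function of one variable attaining both endpoints. Conversely I must show $\mathrm{r}(I)=1$ forces $I$ closed: if $I=f(\R)$ with $f:\R\to\R$ regular (so $f$ extends to a continuous map $f:\R\PP^1\to\R\PP^1$ by Lemma~\ref{curve}, or simply $f=g/h$ with $h$ nowhere zero hence $f$ continuous on all of $\R$ and with finite limits at $\pm\infty$), then $f$ extends continuously to $\R\PP^1\cong\sph^1$, so $I=f(\R)$ has the same closure as the image of the compact set $\R\PP^1$ minus at most the single limit point at infinity; in fact $f(\R\PP^1)$ is a compact connected subset of $\R$ (an interval $[c,d]$ or a point), and $I=f(\R)$ differs from it only possibly by the omission of the one value $f(p_\infty)$; one then checks $I$ is closed in $\R$ unless that omitted value is an endpoint, in which case $I$ is a half-open or open interval — but those are bounded on the open side only if... . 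This needs care: the right statement is that a regular image of $\R$ in $\R$ is always an interval $[c,d]$, $(c,d]$, $[c,d)$, $(c,d)$, $[c,+\infty)$, $(c,+\infty)$, $(-\infty,d]$, $(-\infty,d)$, or $\R$ — and it is \emph{closed} iff attaining the value at $p_\infty$ does not puncture it; and I claim it is closed precisely in the cases $\R$, closed half-line, closed bounded interval, which are exactly the $\mathrm{r}=1$ cases, while every non-closed interval has $\mathrm{r}(I)=2$ by the upper-bound constructions. So $\mathrm{r}(I)=2$ iff $I$ is not closed iff $I$ is open (bounded or a half-line) — but wait, a non-closed bounded interval like $[0,1)$ is \emph{not} open, yet the table says $\mathrm{r}([0,1))=1$.

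That last point is the crux and the main obstacle: I must reconcile the claim "$\mathrm{r}(I)=2$ iff $I\subsetneq\R$ is open" with the fact that $[0,1)$, which is not open, should then have $\mathrm{r}([0,1))=1$, i.e.\ $[0,1)$ must genuinely be a regular image of $\R$ (not just of $\R^2$). The resolution is an explicit regular map: $[0,1)$ is the image of $t\mapsto \dfrac{t^2}{1+t^2}:\R\to\R$ — indeed at $t=0$ it is $0$, it is increasing for $t>0$ with supremum $1$ never attained, hence the image is exactly $[0,1)$. So $\mathrm{r}([0,1))=1$, consistent with the table. Then the characterization in (ii) is: $\mathrm{r}(I)=2$ iff $I$ is open and $I\ne\R$; I verify that this excludes $[0,1)$ (closed on one side, so $\mathrm{r}=1$) and $[0,1]$ (closed, $\mathrm{r}=1$) and $[0,+\infty)$ (closed, $\mathrm{r}=1$) and $\R$ ($\mathrm{r}=1$), and includes only $(c,d)$ and $(c,+\infty)$ and $(-\infty,d)$, for which I must prove $\mathrm{r}\ge 2$, i.e.\ these are \emph{not} regular images of $\R$. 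For that I use the continuity/properness argument above: a nonconstant regular $f:\R\to\R$ extends continuously to $\R\PP^1$, and $f(\R)$ equals $f(\R\PP^1)$ minus possibly $\{f(p_\infty)\}$; since $f(\R\PP^1)$ is a compact interval $[c,d]$, removing one point can produce $[c,d]$, $[c,d)$, $(c,d]$, or $[c,d]\setminus\{$interior point$\}$ (disconnected, hence impossible), so the only intervals arising are $[c,d]$, $[c,d)$, $(c,d]$ — never $(c,d)$ with $c<d$, and never an unbounded open interval since compactness forces boundedness unless $f$ is unbounded, and an unbounded nonconstant regular function of one variable is a polynomial (denominator is a nonzero constant because it has no real root and the function is unbounded, forcing $\deg$ denominator $<\deg$ numerator... actually unboundedness plus no real pole forces the denominator constant) whose image is a closed half-line or $\R$. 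This pins down exactly the closed intervals plus $[c,d)$, $(c,d]$ as the regular images of $\R$, which after affine normalization are precisely $\R$, closed half-lines, closed bounded intervals, and $[0,1)$-type intervals — matching $\mathrm{r}(I)=1$ iff $I$ is not open (as a subset of $\R$), equivalently $\mathrm{r}(I)=2$ iff $I\subsetneq\R$ open. The delicate steps to get right are (a) the precise description of one-variable regular-image intervals via the $\R\PP^1$-extension, and (b) the explicit two-variable maps realizing $\mathrm{r}=2$ and $\mathrm{p}=2$ cases and the one-variable map realizing $[0,1)$, which are short but must be exhibited.
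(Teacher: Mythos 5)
Your overall strategy coincides with the paper's: the lower bounds come from (a) restricting a polynomial map $\R^n\to\R$ to a line through the origin (so a bounded interval with more than one point has ${\rm p}=+\infty$) together with the fact that a nonconstant one‑variable polynomial has closed unbounded image, and (b) extending a regular map $\R\to\R$ to $\R\PP^1$, whose compact connected image can lose at most the single value at $p_\infty$, so a one‑variable regular image is never a proper open interval; the upper bounds come from explicit maps. Two concrete points need repair. First, the one genuinely nontrivial construction --- a polynomial map $\R^2\to\R$ with image exactly $(0,+\infty)$ --- is precisely the step you get wrong: your candidate $(x,y)\mapsto x^2(x^2y^2+1)$ vanishes at $x=0$, so its image is $[0,+\infty)$, not $(0,+\infty)$. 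The standard fix (the one the paper uses) is $(x,y)\mapsto(xy-1)^2+x^2$, which is never zero (that would force $x=0$ and $xy=1$ simultaneously) and attains every $c>0$ by taking $0<x^2<c$ and solving $xy=1\pm\sqrt{c-x^2}$; composing with $s\mapsto s/(1+s)$ then supplies the two‑variable regular map onto $(0,1)$ that you also leave unspecified. Second, your parenthetical claim that an unbounded nonconstant regular function $\R\to\R$ must be a polynomial is false --- $t\mapsto t^3/(1+t^2)$ is unbounded with nonconstant denominator --- but this shortcut is dispensable: the $\R\PP^1$ argument you give elsewhere (if $f$ is unbounded then $F(p_\infty)=p_\infty$, and connectedness of $f(\R)=F(\R\PP^1)\setminus\{p_\infty\}$ forces $p_\infty$ to be an endpoint of the arc $F(\R\PP^1)$, so $f(\R)$ is $\R$ or a closed half‑line) already gives the needed conclusion. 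With these repairs, plus an honest one‑variable regular map onto a closed bounded interval (e.g.\ $t\mapsto\frac{t}{1+t^2}+\frac12$, whose image is $[0,1]$ because $\frac{t}{1+t^2}$ attains its extrema $\pm\frac12$ at $t=\pm1$ and tends to $0$ at infinity --- your $\frac{t^2}{1+t^2}$ only gives $[0,1)$, as you note), your argument matches the paper's.
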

\begin{proof}
(i) First, if $f:\R\to\R$ is a non-constant polynomial map, the image of $f$ is either $\R$ or a proper closed unbounded interval; hence, if $I\subsetneq\R$ is open, then ${\rm p}(I)\geq 2$. In any case, if ${\rm p}(I)=n<+\infty$ and $g:\R^n\to\R$ is a polynomial map such that $g(\R^n)=I$, we take $x_0\in\R^n$ with $g(x_0)\neq g(0)$ and consider the non-constant polynomial map $f:\R\to\R,\ t\mapsto g(tx_0)$; hence, $f(\R)\subset I$ is unbounded.

To finish, it is enough to prove that the interval $[0,+\infty)$ is a polynomial image of $\R$ while $(0,+\infty)$ is a polynomial image of $\R^2$. To that end, consider the polynomial maps
$$
f_1:\R\to\R,\ t\mapsto t^2\qquad\text{and}\qquad f_2:\R^2\to\R,\ (x,y)\mapsto(xy-1)^2+x^2.
$$

(ii) For the second part of this assertion observe that, by Lemma \ref{curve}, a regular map $f:\R\to\R$, extends regularly to a map $F:\R\PP^1\to\R\PP^1$. Thus, the image of $F$ is either $\R\PP^1$ or a proper closed interval $J$ of $\R\PP^1$. If $F(p_\infty)=p_\infty$, then $I=J\setminus\{p_\infty\}$ is an unbounded closed interval of $\R$. On the other hand, if $F(p_\infty)=c\in\R$, then $J=[a,b]$ is a bounded closed interval of $\R$ and $I$ is either equal to $J$ (if $F^{-1}(c)$ is not a singleton) or $J\setminus\{c\}$ (if $F^{-1}(c)$ is a singleton). Hence, since $I$ is connected, it is either $[a,b]$ or one of the half-open bounded intervals $[a,b)$ or $(a,b]$. Thus, if $I\subsetneq\R$ is open, then ${\rm r}(I)\geq 2$.

To finish the proof and in view of (i), it is enough to notice that the intervals $[0,1]$ and $(0,1]$ are regular images of $\R$ via the regular maps
$$
f_3:\R\to\R,\ t\mapsto\frac{t}{1+t^2}+\frac{1}{2},\qquad f_4:\R\to\R,\ t\mapsto\frac{1}{1+t^2}
$$
while the interval $(0,1)$ is a regular image of $\R^2$ via the regular map
$$
f_5:\R^2\to\R,\ (x,y)\mapsto\frac{(xy-1)^2+x^2}{1+(xy-1)^2+x^2}.
$$
The concrete details are left to the reader.
\end{proof}

\subsection*{Proof of Theorem \ref{dim1p}}
(i) $\Longrightarrow$ (ii) We know that $S$ is unbounded and, by \cite[3.1]{fg3}, $S$ is irreducible. Next, since ${\rm p}(S)<+\infty$, there exists a regular map $f:\R^n\to\R^m$ such that $f(\R^n)=S$. By Lemma \ref{fact2}, there are polynomial maps $h:\R\rightarrow\R^m$ and $g:\R^n\rightarrow\R$ satisfying $f=h\circ g$; notice that the Zariski closures of $f(\R^n)$ and $h(\R)$ coincide. Now, by Lemma \ref{fact1} applied to the polynomial map $h$, we conclude that $\cl_{\C\PP^m}^{\zar}(S)$ is an invariant rational curve such that $\cl_{\C\PP^m}^{\zar}(S)\cap\mathsf{H}_\infty(\C)=\{p\}$ is a singleton and the germ $\cl_{\C\PP^m}^{\zar}(S)_p$ is irreducible.

\vspace{1mm}
(ii) $\Longrightarrow$ (i) Let $\Pi:=(\Pi_0:\ldots:\Pi_m):\C\PP^1\to\cl_{\C\PP^m}^{\zar}(S)$ be an invariant normalization of $\cl_{\C\PP^m}^{\zar}(S)$; in particular, $\pi(\R\PP^1)=\cl_{\R\PP^m}^{\zar}(S)_{(1)}$. Since $\cl_{\C\PP^m}^{\zar}(S)\cap\mathsf{H}_\infty(\C)=\{p\}$ is a singleton and the germ $\cl_{\C\PP^m}^{\zar}(S)_p$ is irreducible, we may assume that
$$
\Pi^{-1}(\cl_{\C\PP^m}^{\zar}(S)\cap\mathsf{H}_\infty(\C))=\{(0:1)\};
$$
hence, $\Pi_0=\t_0^d$ for some $d\geq1$. Therefore, $\pi:=\Pi|_{\R}:\R\equiv\R\PP^1\setminus\{p_\infty\}\to\R^m$ is a polynomial map, and since $S$ is irreducible and $1$-dimensional, $S\subset\pi(\R)=\cl_{\R\PP^m}^{\zar}(S)_{(1)}\setminus\mathsf{H}_\infty(\R)$. Moreover, since $(\R,\pi)$ is the normalization of $\cl^{\zar}(S)$, there exists, by \cite[3.5]{fg3}, an interval $I\subset\R$ such that $\pi(I)=S$; in fact, since $S$ is unbounded, so is $I$. Now, by Lemma \ref{interval}, $I$ and so $S$ are polynomial images of $\R^2$, as wanted.
\qed

\subsection*{Proof of Proposition \ref{p12}}
If ${\rm p}(S)=1$, there exists a non-constant polynomial map $f:\R\to\R^m$ such that $f(\R)=S$. Now, since $f$ is proper, $S$ is closed in $\R^m$.

Conversely, as we have seen in the proof of (ii) $\Longrightarrow$ (i) in Theorem \ref{dim1p}, there exists a polynomial map $\pi:\R\to\R^m$ such that $(\R,\pi)$ is the normalization of $\cl^{\zar}(S)$. Thus, there exists, by \cite[3.5]{fg3}, an interval $I\subset\R$ such that $\pi(I)=S$. Since $S$ is unbounded and closed, such interval can also be taken unbounded and closed. Thus, by Lemma \ref{interval}, $I$ and so $S$ are polynomial images of $\R$.
\qed

\subsection*{Proof of Theorem \ref{dim1r}}
(i) $\Longrightarrow$ (ii) First, by \cite[3.1]{fg3}, $S$ is irreducible. Let now $f:\R^n\to\R^m$ be a regular map such that $f(\R^n)=S$. By Lemma \ref{fact2}, there exist a rational function $g\in\R(\x)$ and a rational map $h:=(\frac{h_1}{h_0},\ldots,\frac{h_m}{h_0}):\R\to\R^m$ such that $f=h\circ g$. Now, by Lemma \ref{fact1}, we deduce that $\cl_{\R\PP^m}^{\zar}(S)$ is a rational curve.

\vspace{1mm}
(ii) $\Longrightarrow$ (i) Let $\pi:\R\PP^1\to\cl_{\R\PP^m}^{\zar}(S)$ be the normalization of $\cl_{\R\PP^m}^{\zar}(S)$; recall that $\pi(\R\PP^1)=\cl_{\R\PP^m}^{\zar}(S)_{(1)}$. If $S=\cl_{\R\PP^m}^{\zar}(S)_{(1)}$, then $S$ is, by Example \ref{s1rp1}, a regular image of $\R$. On the other hand, if $S\neq\cl_{\R\PP^m}^{\zar}(S)_{(1)}$, we may assume that the image of the infinite point $p_\infty$ of $\R\PP^1$ under $\pi$ belongs to $\cl_{\R\PP^m}^{\zar}(S)_{(1)}\setminus S$. Now, by \cite[3.5]{fg3}, there exists an interval $I\subset\R=\R\PP^1\setminus\{p_\infty\}$ such that $\pi(I)=S$. By Lemma \ref{interval}, we conclude that $I$ and so $S$ are regular images of $\R^2$.
\qed

\subsection*{Proof of Proposition \ref{r12}}
Suppose first that ${\rm r}(S)=1$. Let $f:\R\to\R^m$ be a regular map such that $f(\R)=S\subset\cl^{\zar}(S)$. By Lemma \ref{fact1}, $f$ extends to a surjective regular map $F:\C\PP^1\to\cl_{\C\PP^m}^{\zar}(S)$ and we may decompose $F=\Pi\circ\widetilde{F}$ where $\widetilde{F}:\C\PP^1\to\C\PP^1$ is an invariant surjective regular map and $(\C\PP^1,\Pi)$ is an invariant normalization of $\C\PP^1$; we may assume that $p_\infty\in\Pi^{-1}(\mathsf{H}_\infty(\C))$. Since $f$ is a regular map, 
$$
\varnothing=F^{-1}(\mathsf{H}_\infty(\C))\cap\R=(\widetilde{F})^{-1}(\Pi^{-1}(\mathsf{H}_\infty(\C)))\cap\R.
$$
Thus, since $p_\infty\in\Pi^{-1}(\mathsf{H}_\infty(\R))$, we deduce that the image of $\widetilde{f}:=\widetilde{F}|_\R$ is contained in $\R$ and so $\widetilde{f}:\R\to\R$ is a regular map such that $f=\pi\circ\widetilde{f}$, where $\pi:=\Pi|_\R$. By Lemma \ref{interval}, we may assume $\widetilde{f}(\R)=\R$, $[0,\infty)$, $[0,1]$ or $[0,1)$. 

In case $\widetilde{f}(\R)=[0,1]$, then $\cl_{\R\PP^m}(S)=S$. Otherwise, let $q:=p_\infty$ in case $\widetilde{f}(\R)=\R$ or $[0,\infty)$, and $q:=1$ in case $\widetilde{f}(\R)=[0,1)$. Observe that $J:=\widetilde{f}(\R)\cup\{q\}$ is a closed subset of $\R\PP^1$; hence, its image $S\cup\{\pi(q)\}$ under $\pi$ is a closed subset of $\R\PP^m$ and so $\cl_{\R\PP^m}(S)=S\cup\{\pi(q)\}$. Thus, $\cl_{\R\PP^m}(S)\setminus S$ is either empty or a singleton.

Suppose now that $\cl_{\R\PP^m}(S)\setminus S=\{p:=\pi(q)\}$; hence, $\pi^{-1}(p)\cap\widetilde{f}(\R)=\varnothing$ because $S=f(\R)=\pi(\widetilde{f}(\R))$ and so $\pi^{-1}(p)\cap J=\{q\}$. Thus, $S_p=\pi((\widetilde{f}(\R))_q)$ and we conclude that the analytic closure of the germ $S_p$ is irreducible.

\vspace{1mm}
Conversely, by Theorem \ref{dim1r} and \cite[3.5]{fg3} there exists a connected subset $I\subset\R\PP^1$ such that $\pi(I)=S$, where $(\R\PP^1,\pi)$ is the normalization of $\cl_{\R\PP^m}^{\zar}(S)$. In fact, $I$ is the unique $1$-dimensional connected component of $\pi^{-1}(S)$. We distinguish two possibilities:

\vspace{1mm}
\noindent{\sc Case 1}. $\cl_{\R\PP^m}(S)=S$. Then, $S$ is closed in $\R\PP^m$ and so $I$ is either $\R\PP^1$ or a compact interval contained in $\R\PP^1$ that we may assume equal to $[0,1]$.

\vspace{1mm}
\noindent{\sc Case 2}. $\cl_{\R\PP^m}(S)\setminus S=\{p\}$ is a singleton and the analytic closure of the germ $S_p$ is irreducible. Observe that $\cl_{\R\PP^m}(S)=\pi(\cl_{\R\PP^1}(I))$ and, since the analytic closure of the germ $S_p$ is irreducible, we deduce that $(\pi|_{\cl_{\R\PP^1}(I)})^{-1}(p)=\{a\}$ is a singleton. Thus, $I=\cl_{\R\PP^1}(I)\setminus\{a\}$ and we may assume that either $I=[0,1)$ or $I=\R$.

\vspace{1mm}
In both cases we conclude, by Lemma \ref{interval} and Example \ref{s1rp1}, that $S$ is a regular image of $\R$, as wanted.
\qed

\end{document}